\newtheorem{theoremcounter}{Theorem Counter}[section]
\theoremstyle{definition}
\newtheorem{defn}[theoremcounter]{Definition}
\theoremstyle{plain}
\newtheorem{lem}[theoremcounter]{Lemma}
\newtheorem{prop}[theoremcounter]{Proposition}
\newtheorem{thm}[theoremcounter]{Theorem}
\numberwithin{equation}{section}
\subjclass[2020]{Primary 57M12, 14G12; 
Secondary 57M10, 11R37} 
\keywords{knot, link, 3-manifold, id\`ele, Hasse norm principle, arithmetic topology}  
\begin{document}

   \title{On genus theory for $3$-manifolds\\ in arithmetic topology}
   \author{Hirotaka TASHIRO}
\maketitle
 
\begin{abstract}
Based on the analogies of arithmetic topology, we show a topological analogue of Hilbert's Satz 90 for id\`ele groups and utilize our previously established Hasse norm principle
to present a proof of an Iyanaga--Tamagawa type genus formula for finite abelian branched covers over integral homology 3-spheres in a very parallel manner to the case of number theory.
\end{abstract}

\tableofcontents
\section{Introduction}
\subsection{Backgrounds}
After Artin, Takagi, and Chevalley founded the id\`ele class field theory for Abelian extensions of number fields in the first half of the 20th century, many applications of general class field theory were found in algebraic number theory. Among others, Iyanaga--Tamagawa \cite{IyanagaTamagawa} addressed this by establishing a certain genus formula for cyclic extensions of the rationals, utilizing the Hasse norm principle and a version of Hilbert's Satz 90 for id\`ele groups.

Meanwhile, drawing the analogy between knots and primes, or 3-manifolds and the ring of integers of number fields, analogues of id\`elic class field theory and the Hasse norm principle have been developed by Niibo  \cite{Niibo1}, Ueki \cite{NiiboUeki, NiiboUeki2023RMS}, and the author \cite{Tashiro2024}. The purpose of this paper is to present a topological analogue of Iyanaga--Tamagawa's genus formula for cyclic branched covers over an integral homology 3-sphere, in a manner that \emph{closely parallels} the original proof for number fields. We note that a similar formula was first formulated by Morishita over $S^3$ in \cite{Morishita2001g}, though his proof differs from ours; our approach relies on id\`elic class field theory, our previous work on the Hasse norm principle, and a newly established version of Hilbert's Satz 90 for id\`ele groups of 3-manifolds endowed with very admissible links.

We also remark that several other versions of the genus formula for number fields exist; analogues of Yokoi \cite{Yokoi1967} and Furuta \cite{Furuta1967} have been developed by Ueki \cite{Ueki1, Ueki3} tailored to their respective objectives. 

Now, let us recall some basic dictionary of dictionary in arithmetic topology, which we will use in this paper. Let $F$ be a number field and let $\mathcal{O}_F$ denote the ring of integers of $F$. For any a 3-manifold $M$, we put a very admissible link $\mathcal{L}$ (cf. Section 2).\\

\begin{center}
\begin{longtable}{|c|c|}
\hline

compactified spectrum of $\mathcal{O}_F$ & oriented connected closed 3-manifold \\

$\overline{{\rm Spec}(\mathcal{O}_{F})}$ & $M$\\

\hline

maximal ideal of $\mathcal{O}_F$ &knot in $M$  \\

${\rm Spec}(\mathcal{O}_F/{\mathfrak{p}})\hookrightarrow {\rm Spec}(\mathcal{O}_{F})$ &$K:S^{1}\hookrightarrow M$\\

\hline

finite set of maximal ideals of $\mathcal{O}_F$ & link in $M$ \\

$S=\{\mathfrak{p}_{1}, \cdots \mathfrak{p}_{n}\}$ &$L=K_{1}\sqcup \cdots \sqcup K_{n}$\\

\hline
set of all primes of $F$ & very admissible link in $M$ \\

$\mathcal{P}$ &$\mathcal{L}$\\

\hline

multiplicative group of $F$& group of $2$-chains of $M$ \\

$F^{\times}$ &$C_2(M; \mathbb{Z})$ \\

\hline

group of fractional ideals of ${\mathcal O}_{F}$ & group of $1$-cycles of $M$ \\

$J_F$ & $Z_1(M)$ \\

\hline

boundary map & boundary map \\

$\partial _{F}:F^{\times} \rightarrow J_{F}$ &$\partial _{M}:C_{2}(M) \rightarrow  Z_{1}(M)$ \\

$a \mapsto (a)$ & $\Sigma \mapsto \partial _M \Sigma$\\

\hline

(narrow) ideal class group of $F$ &1st homology group of $M$ \\

$H(F) = {\rm Coker}(\partial _{F})\ (H^+(F))$ & $H_{1}(M) = {\rm Coker}(\partial _{M})$\\

\hline

id\`ele group of $F$ & id\`ele group of $(M,\mathcal{L})$\\

$I_F$ & $I_{M,\mathcal{L}}$\\

\hline

principal id\`ele group of $F$ & principal id\`ele group of $(M,\mathcal{L})$\\

$P_F$ & $P_{M,\mathcal{L}}$\\

\hline

id\`ele class group of $F$ & id\`ele class group of $(M,\mathcal{L})$\\

$C_F=I_F/P_F$ & $C_{M,\mathcal{L}}=I_{M,\mathcal{L}}/P_{M,\mathcal{L}}$\\

\hline

 finite (ramified) extension  &finite (branched) covering\\

$E/F$ &$h:N\rightarrow M$\\

\hline

$n$-th power residue symbol  &mod $n$ linking number  \\
of $a\in F^\times$ and a prime $p\in\mathbb{N}$&of a link $L$ and a knot $K$\\
$(\frac{a}{p})_n$ &${\rm lk}(L,K)\ {\rm mod}\ n$ \\

\hline

\end{longtable}
\end{center}

\subsection{Genus theory in number theory}
Next, we overview Iyanaga--Tamagawa's results in Number theory \cite{IyanagaTamagawa}; 
a version of Hilbert's Satz 90 for id\`ele groups and a genus formula for finite cyclic extensions. 
\begin{thm}[Hilbert's Satz 90 for id\`ele groups {\cite[Chapter 7, Corollary 7.4]{CasselsFrohlich}}] \label{Hil90.nt} 
Let $E/F$ be a finite cyclic extension of number fields with the Galois group ${\rm Gal}(E/F)=\langle\tau \rangle$, and let ${\rm N}_{E/F}:I_E\to I_F$ denote the norm map of id\`ele groups. Then, $H^1({\rm Gal}(E/F),I_E)=0$ holds. Namely, for any $a \in I_E$ with ${\rm N}_{E/F}(a)=1$, there exists $b \in I_E$ such that $a=b^{\tau-1}$. 
\end{thm}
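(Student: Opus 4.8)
The plan is to prove the vanishing of the first cohomology group $H^1(\mathrm{Gal}(E/F), I_E)$ by reducing the problem to the local situation at each prime, where Hilbert's classical Satz 90 can be applied. The key structural fact is that the idèle group $I_E$ decomposes as a restricted direct product over the primes of $E$, and this decomposition is compatible with the Galois action. Since $\mathrm{Gal}(E/F)$ permutes the primes of $E$ lying over a fixed prime $v$ of $F$, the Galois module $I_E$ breaks up, prime-by-prime over $F$, into a product of induced (coinduced) modules. I would first make this decomposition explicit, writing $I_E \cong \prod_v' \bigl(\prod_{w \mid v} E_w^{\times}\bigr)$ where the inner product over places $w$ of $E$ above $v$ carries a transitive $\mathrm{Gal}(E/F)$-action.

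First I would fix a place $v$ of $F$, choose one place $w_0$ of $E$ above it, and observe that the decomposition group $G_{w_0} = \mathrm{Gal}(E_{w_0}/F_v)$ is the stabilizer of $w_0$ in $G = \mathrm{Gal}(E/F)$, so that the set of places above $v$ is identified with the coset space $G/G_{w_0}$. The factor $\prod_{w \mid v} E_w^{\times}$ is then precisely the induced module $\mathrm{Ind}_{G_{w_0}}^{G}\, E_{w_0}^{\times}$. The next step is to invoke Shapiro's lemma, which gives $H^1(G, \mathrm{Ind}_{G_{w_0}}^{G}\, E_{w_0}^{\times}) \cong H^1(G_{w_0}, E_{w_0}^{\times})$. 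Since $E_{w_0}/F_v$ is a finite cyclic extension of local fields with Galois group $G_{w_0}$, the classical Hilbert Satz 90 (or more precisely $H^1$ of a Galois module being a field's multiplicative group) yields $H^1(G_{w_0}, E_{w_0}^{\times}) = 0$. Therefore each local factor contributes nothing to the first cohomology.

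The remaining step is to assemble the local vanishing into the global statement, which requires care because $I_E$ is a \emph{restricted} direct product rather than a full direct product. I would argue that cohomology commutes with this restricted product in the relevant degree: a cocycle, being determined by the finitely many values of the generator $\tau$ acting on an idèle whose components are units at almost all primes, can be split componentwise using the local solutions, and the resulting $b \in I_E$ remains an idèle (i.e.\ has unit components almost everywhere) because the local splittings respect the unit subgroups $\mathcal{O}_{E_w}^{\times}$ at the unramified primes. Concretely, for $a \in I_E$ with $\mathrm{N}_{E/F}(a) = 1$, one solves $a_w = b_w^{\tau - 1}$ prime-by-prime and checks the integrality of $b = (b_w)_w$ at the finitely many bad primes separately.

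I expect the main obstacle to be precisely this last bookkeeping with the restricted product: one must verify that the element $b$ constructed componentwise genuinely lies in $I_E$ and not merely in the full product $\prod_w E_w^{\times}$, which amounts to checking that the local cochains can be chosen integral (unit-valued) at all but finitely many places. Away from ramification and the support of $a$, the relevant local cohomology groups $H^1(G_{w_0}, \mathcal{O}_{E_{w_0}}^{\times})$ vanish as well, so the splitting can be performed within the units; making this compatibility uniform across the product is the technical heart of the argument. The cyclicity of $G$ is also what lets me phrase everything concretely in terms of the single relation $a = b^{\tau - 1}$, rather than working with abstract $1$-cocycles throughout.
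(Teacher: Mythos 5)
Your proposal is correct, but there is nothing in the paper to compare it against: the paper gives no proof of \cref{Hil90.nt}, quoting it as classical background and delegating the proof to \cite[Chapter 7, Corollary 7.4]{CasselsFrohlich}. What you have written is essentially that reference's own argument: the restricted direct product decomposition $I_E \cong {\prod_v}' \bigl(\prod_{w \mid v} E_w^{\times}\bigr)$, the identification of each semi-local factor with the induced module $\mathrm{Ind}_{G_{w_0}}^{G} E_{w_0}^{\times}$, Shapiro's lemma, local Hilbert 90, and --- the point you rightly single out as the technical heart --- the vanishing $H^1(G_{w_0}, \mathcal{O}_{E_{w_0}}^{\times}) = 0$ at unramified places, which is exactly what lets cohomology pass through the restricted product (formally one writes $I_E$ as a filtered direct limit of the subgroups $I_{E,S}$ over finite sets $S$ of places and uses that cohomology of a finite group commutes with filtered colimits). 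So your outline is complete and correctly locates the only nontrivial compatibility; the archimedean places cause no trouble since Hilbert 90 holds for $\mathbb{C}/\mathbb{R}$ as well. It is worth observing that the paper's genuinely new result, the topological analogue (\cref{Hil90.ldt}), is proved by two arguments that mirror yours: the first proof constructs $b$ componentwise over the knots lying above each $K \subset \mathcal{L}$, with the Hilbert decomposition theory of branched covers playing the role of local field theory, and there the restricted-product bookkeeping is automatic because the explicit partial-sum construction sends components lying in $\mathbb{Z}[\mu_J]$ to components lying in $\mathbb{Z}[\mu_J]$; the second, ``alternative'' proof is your Shapiro step in disguise, namely the cohomological triviality of induced $\mathbb{Z}G$-modules.
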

Now let $E/F=k/\mathbb{Q}$ with degree $n$ unramified outside $p_1,\ldots,p_r,\infty$. Here, suppose that $p_1,\dots ,p_r$ are prime numbers with $p_i\equiv1$ mod $n$ for each $i$. Let $e_i$ denote the ramification index of $p_i$ and put $\zeta_m:={\rm exp}(\frac{2\pi\sqrt{-1}}{m})\in \mathbb{C}$ for any $m\in\mathbb{Z}_{>0}$, so we have $\langle \zeta_{e_i}\rangle\cong \mathbb{Z}/e_i\mathbb{Z}$. Fix an embedding $\mathbb{Q}(\langle \zeta_{e_i}\rangle)\hookrightarrow\mathbb{Q}_{p_i}$ into the $p_i$-adic number field, so the $n$-th power residue symbol $(\frac{\mathfrak{a}}{p_i})_n$ of $\mathfrak{a}\in J_k$ over $p_i$ is defined to be a value in $\langle \zeta_{e_i}\rangle$. An ideal representing an ideal class of the narrow ideal class group $H^+(k)$ will be taken to be that of $\mathcal{O}_k$ disjoint from the ramified locus $S=\{p_1,\dots,p_r,\infty\}$. 
\begin{defn}\label{def.genus.nt}
We say $[\mathfrak{a}],[\mathfrak{b}]\in H^+(k)$ belong to \emph{the same genus} and write as $[\mathfrak{a}]\approx[\mathfrak{b}]$ if the following holds:
\[
\Big(\frac{{\rm N}\mathfrak{a}}{p_i}\Big)_{e_i}=\Big(\frac{{\rm N}\mathfrak{b}}{p_i}\Big)_{e_i} \mbox{for every }i.
\] 
This definition does not depend on the choice of ideals $\mathfrak{a},\mathfrak{b}$ representing the classes $[\mathfrak{a}],[\mathfrak{b}]$. We define \emph{the genus number} of $k/\mathbb{Q}$ by $g_{k/\mathbb{Q}}=\#(H^+(k)/H^+(k)^{\tau-1})$. 
\end{defn}
Iyanaga--Tamagawa's results may be explicitly described as follows (\cite[Theorem4,5]{IyanagaTamagawa}, see also \cite[Subsection 6.3]{Morishita2024}, \cite{Morishita2001g}).
\begin{thm}[Genus theory {\cite[Theorem 4,5]{IyanagaTamagawa}}] \label{thm.genus.nt} 
Let $k/\mathbb{Q}$ be as in the above and consider the group homomorphism $\chi:H^+(k)\rightarrow \prod_{i=1}^r\langle\zeta_{e_i}\rangle; [\mathfrak{a}]\mapsto ((\frac{{\rm N}\mathfrak{a}}{p_i})_{e_i})$. Then,
\[{\rm Im}\chi=\{(x_i)\in \prod_{i=1}^r\langle\zeta_{e_i}\rangle\mid\prod_{i=1}^r x_i^{\frac{n}{e_i}}=1\in\langle\zeta_n\rangle\},\  {\rm Ker}\chi=H^+(k)^{\tau-1},
\]
 and
 \[H^+(k)/\approx\ \  \simeq H^+(k)/H^+(k)^{\tau-1} \simeq {\rm Im}\chi.
\]
In particular, $$g_{k/\mathbb{Q}}=\frac{\prod_i e_i}{n}.$$
\end{thm}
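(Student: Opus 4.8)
The plan is to run the three assertions of the theorem in parallel with the idèlic computation of the genus number, writing $G=\mathrm{Gal}(k/\mathbb{Q})=\langle\tau\rangle$ and abbreviating by $R=\{(x_i)\in\prod_{i=1}^r\langle\zeta_{e_i}\rangle\mid\prod_{i=1}^r x_i^{n/e_i}=1\in\langle\zeta_n\rangle\}$ the target subgroup. First I would settle the inclusion $\mathrm{Im}\,\chi\subseteq R$ by applying Hilbert's reciprocity law, i.e.\ the product formula for the norm-residue symbols of the global number $\mathrm{N}\mathfrak{a}\in\mathbb{Q}^\times$ relative to $k/\mathbb{Q}$. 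Because $k/\mathbb{Q}$ is unramified outside $S$, the local symbol at a prime $q\notin S$ is trivial since $v_q(\mathrm{N}\mathfrak{a})$ is a multiple of the residue degree $f_q$, and the archimedean symbol is trivial since $\mathrm{N}\mathfrak{a}>0$; identifying the inertia group at $p_i$ with $\langle\zeta_{e_i}\rangle$ and the local symbol with $(\frac{\mathrm{N}\mathfrak{a}}{p_i})_{e_i}$, the product formula collapses to exactly the relation defining $R$. The one delicate point here is matching the normalization of the power residue symbol with the image of the local inertia inside $G\cong\langle\zeta_n\rangle$, which is what fixes the exponents $n/e_i$.

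Next I would identify $\mathrm{Ker}\,\chi$, the heart of the argument. The inclusion $H^+(k)^{\tau-1}\subseteq\mathrm{Ker}\,\chi$ is immediate: for $\mathfrak{a}=\mathfrak{b}^{\tau-1}(\alpha)$ the ideal norm satisfies $\mathrm{N}(\mathfrak{b}^{\tau-1})=(1)$ by $G$-invariance of $\mathrm{N}$, so $\mathrm{N}\mathfrak{a}=(\mathrm{N}_{k/\mathbb{Q}}\alpha)$ is generated by a global norm, hence a local norm at each $p_i$, hence of trivial power residue symbol there. For the reverse inclusion suppose $\chi[\mathfrak{a}]=1$. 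The tame local theory at $p_i$ (valid since $p_i\equiv1\bmod e_i$, so $\mathbb{Q}_{p_i}\supseteq\mu_{e_i}$ and $k_{\mathfrak{P}_i}/\mathbb{Q}_{p_i}$ is tamely ramified of degree $e_i$) shows that a $p_i$-adic unit is a local norm precisely when its $e_i$-th power residue symbol is trivial; combined with the automatic local-norm property at the unramified primes and the positivity at $\infty$, this makes $\mathrm{N}\mathfrak{a}$ a local norm everywhere. The Hasse norm principle for $k/\mathbb{Q}$ then produces a totally positive $\beta\in k^\times$ with $\mathrm{N}_{k/\mathbb{Q}}(\beta)=\mathrm{N}\mathfrak{a}$.

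I would then descend via Satz 90. Lifting $\mathfrak{a}$ to the idèle $(\pi_v^{v(\mathfrak{a})})_v$ and multiplying by $\beta^{-1}$ yields $a\in I_k$ representing $[\mathfrak{a}]$ in $H^+(k)$ whose norm $\mathrm{N}_{k/\mathbb{Q}}(a)$ is a unit idèle of $\mathbb{Q}$; its component at $p_i$ equals $(\mathrm{N}\beta)^{-1}$, of trivial symbol by the hypothesis $\chi[\mathfrak{a}]=1$, so it lies in the local norm group there, while at the remaining places it is automatically a local unit-norm. Hence there is $u$ in the positive unit idèles with $\mathrm{N}(au)=1$, and $au$ still represents $[\mathfrak{a}]$ in $H^+(k)$. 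Applying Hilbert's Satz 90 for idèle groups (\Cref{Hil90.nt}) to the norm-trivial idèle $au$ gives $b\in I_k$ with $au=b^{\tau-1}$, and projecting to $H^+(k)$ yields $[\mathfrak{a}]=[b]^{\tau-1}\in H^+(k)^{\tau-1}$, as desired.

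Finally I would assemble the isomorphisms and the count. By the first isomorphism theorem $\chi$ induces $H^+(k)/\approx\,=H^+(k)/\mathrm{Ker}\,\chi\xrightarrow{\sim}\mathrm{Im}\,\chi$, and the computation of $\mathrm{Ker}\,\chi$ rewrites the left-hand side as $H^+(k)/H^+(k)^{\tau-1}$. Surjectivity $\mathrm{Im}\,\chi=R$ I would obtain by realizing any admissible tuple through suitable prime ideals, choosing rational primes in prescribed residue classes modulo $p_1\cdots p_r$ by Dirichlet's theorem so that the power residue symbols take the required values, the only constraint being the reciprocity relation already cutting out $R$. The order $|R|=(\prod_i e_i)/n$ then follows because $R$ is the kernel of a surjection $\prod_i\langle\zeta_{e_i}\rangle\twoheadrightarrow\langle\zeta_n\rangle$, the surjectivity being equivalent to $\mathrm{lcm}(e_1,\dots,e_r)=n$, which holds since $\mathbb{Q}$ has no nontrivial extension unramified at all finite primes (Minkowski), forcing the finite inertia subgroups to generate $G$. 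Combining, $g_{k/\mathbb{Q}}=|H^+(k)/H^+(k)^{\tau-1}|=|R|=(\prod_i e_i)/n$. I expect the reverse kernel inclusion, namely packaging ``everywhere a local norm'' through the Hasse norm principle and then descending through Satz 90, to be the main obstacle, with the archimedean bookkeeping in the narrow setting and the surjectivity onto $R$ as the secondary technical points.
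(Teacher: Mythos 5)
Your proposal is correct in substance, but note first that the paper does not actually prove \cref{thm.genus.nt}: it defers to Iyanaga--Tamagawa and Morishita, and what it proves in full is the parallel topological statement \cref{thm.genus.ldt}, whose proof is the blueprint to compare against. Your kernel argument coincides with that blueprint exactly: ``trivial symbols $\Rightarrow$ local norm everywhere $\Rightarrow$ Hasse norm theorem $\Rightarrow$ norm-one id\`ele $\Rightarrow$ \cref{Hil90.nt}'' is Step 1 of the paper's proof, where the id\`elic Hasse norm principle (\cref{HNP}) is followed by \cref{Hil90.ldt}. Where you genuinely diverge is the image. The paper gets both inclusions and the count from a single commutative diagram: the symbol map $\psi$ on $I/P$ is visibly surjective onto $\prod_i\mathbb{Z}/e_i\mathbb{Z}$ with kernel the unit-norm part, and the reciprocity isomorphism (id\`elically $C_{\mathbb{Q}}/\mathrm{N}(C_k)\cong\mathrm{Gal}(k/\mathbb{Q})$, topologically \cref{GCFT}) makes $\psi$ compatible with $\Sigma:\prod_i\mathbb{Z}/e_i\mathbb{Z}\to\mathbb{Z}/n\mathbb{Z}$, forcing the norm part to map exactly onto $\mathrm{Ker}\,\Sigma=R$. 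You instead prove $\mathrm{Im}\,\chi\subseteq R$ by the product formula and $R\subseteq\mathrm{Im}\,\chi$ by Dirichlet's theorem (realizing admissible tuples by degree-one primes), with Minkowski giving $\mathrm{lcm}(e_1,\dots,e_r)=n$. That classical route is sound, but it takes on two burdens the id\`elic diagram absorbs automatically: (i) the archimedean bookkeeping in the narrow setting --- the Hasse norm theorem gives $\beta$ with $\mathrm{N}_{k/\mathbb{Q}}\beta=\mathrm{N}\mathfrak{a}$ but not automatically totally positive, so when $k$ is totally real a further sign-correction (say via the element-level Satz 90) is needed before concluding in $H^+(k)$; (ii) your Dirichlet step requires identifying the inertia group at $p_i$ with the order-$e_i$ quotient of $(\mathbb{Z}/p_i\mathbb{Z})^\times$ in order to know that a prime with prescribed symbols satisfying the relation in $R$ really splits completely. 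You flag both points and both are standard, so I regard the proof as correct by a partially different route; the trade-off is that the paper's diagrammatic argument needs no prime-existence theorem at all, which is precisely why it transplants verbatim to the $3$-manifold setting.
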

\cref{thm.genus.nt} is proved by using Theorem 1.1 and Hasse norm theorem for $k/\mathbb{Q}$. For this details of this argument, we refer to \cite[Section 2]{IyanagaTamagawa}, \cite[Theorem 6.3]{Morishita2024}.
\subsection{Genus theory in 3-dimensional topology and main results}
Now, let us turn to a topological analogue for 3-manifolds. Let $M$ be an oriented, connected, closed $3$-manifold. We fix a very admissible link $\mathcal{L}$ in $M$, which is a link consisting of countably many (finite or infinite) tame components with certain conditions and plays an analogous role to the set of all primes, and we shall employ the notions of the id\`{e}le group $I_{M,\mathcal{L}}$ and the principal id\`{e}le group $P_{M,\mathcal{L}}$ introduced in \cite{NiiboUeki}. Note that $P_{M,\mathcal{L}}$ is defined as the image of the diagonal map $\Delta _{M,\mathcal{L}}:H_2 (M,\mathcal{L})\to I_{M,\mathcal{L}}$ (cf. Section 2). For a finite cover $f: N\rightarrow M$ branched over a finite sublink $L$ of $\mathcal{L}$, $f^{-1}(\mathcal{L})$ is again a very admissible link of $N$. Note that the induced map $f_* : I_{N,f^{-1}(\mathcal{L})} \rightarrow I_{M,\mathcal{L}}$ is an analogue of the norm map. One of our main theorem, which may be regarded as a topological analogue for 3-manifolds of \cref{Hil90.nt}, is stated as follows.

\begin{thm}[Theorem 3.1]
Let $M$ be an oriented connected closed 3-manifolds endowed with a very admissible link $\mathcal{L}$. Let $f:N\to M$ be a finite cyclic covering branched over a finite sublink $L_0$ of $\mathcal{L}$ with the Galois group ${\rm Gal}(f)=\langle \tau \rangle$. Then, $\widehat{H}^1({\rm Gal}(f),I_{N,f^{-1}(\mathcal{L})})=0$ holds. Namely, for any $a \in I_{N,f^{-1}(\mathcal{L})}$ with $f_*(a)=0$, there exists $b \in I_{N,f^{-1}(\mathcal{L})}$ such that $a=(\tau-1)b$.
\end{thm}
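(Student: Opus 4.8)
The plan is to carry the classical proof of \cref{Hil90.nt} (as given in \cite{CasselsFrohlich}) across the arithmetic--topological dictionary; its two engines are Shapiro's lemma and a \emph{local} Hilbert's Satz 90. Since $\mathrm{Gal}(f)=\langle\tau\rangle$ is cyclic, $\widehat H^{1}(\mathrm{Gal}(f),-)$ is the Tate cohomology group $\ker(\mathrm{N})/(\tau-1)$ with $\mathrm{N}=\sum_{i}\tau^{i}$ the norm, and since $f_{*}$ is the analogue of the norm, the assertion ``$f_{*}(a)=0\Rightarrow a=(\tau-1)b$'' is precisely the statement $\widehat H^{1}(\mathrm{Gal}(f),I_{N,f^{-1}(\mathcal{L})})=0$. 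It therefore suffices to prove this vanishing.

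First I would use the restricted-product description of the idèle group to cut it into semi-local factors indexed by the components $K$ of $\mathcal{L}$. Because $f$ is a finite Galois covering, over each $K$ there lie only finitely many components $\widetilde K$ of $f^{-1}(\mathcal{L})$, on which $\mathrm{Gal}(f)$ acts transitively; writing $I_{\widetilde K}$ for the local idèle factor at $\widetilde K$ one obtains a $\mathrm{Gal}(f)$-equivariant decomposition
\[
I_{N,f^{-1}(\mathcal{L})}\;\cong\;{\prod_{K\subset\mathcal{L}}}'\Big(\bigoplus_{\widetilde K\mid K}I_{\widetilde K}\Big).
\]
Fixing one $\widetilde K_{0}$ over each $K$ with decomposition group $D_{K}=\mathrm{Stab}_{\mathrm{Gal}(f)}(\widetilde K_{0})$, transitivity identifies the inner sum with the induced module $\mathrm{Ind}_{D_{K}}^{\mathrm{Gal}(f)}I_{\widetilde K_{0}}$ (a finite induction, so induced and coinduced coincide), and Shapiro's lemma collapses the global question to a local one,
\[
\widehat H^{1}\big(\mathrm{Gal}(f),\,\mathrm{Ind}_{D_{K}}^{\mathrm{Gal}(f)}I_{\widetilde K_{0}}\big)\;\cong\;\widehat H^{1}(D_{K},I_{\widetilde K_{0}}).
\]

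The crux is the local Hilbert's Satz 90, $\widehat H^{1}(D_{K},I_{\widetilde K_{0}})=0$, for the cyclic group $D_{K}$. I would read this off from the local model of $f$ over a tubular neighborhood $V_{K}\cong D^{2}\times S^{1}$: for $K\not\subset L_{0}$ it is an unbranched cyclic cover of solid tori, whereas for a branch component $K\subset L_{0}$ it is the cyclic branched cover $(z,w)\mapsto(z^{e},w)$ with deck action $(z,w)\mapsto(\zeta_{e}z,w)$. In each case I would determine the $D_{K}$-module structure of $I_{\widetilde K_{0}}$ from this geometric picture and then verify $\ker(\mathrm{N})=(\tau-1)I_{\widetilde K_{0}}$ by direct computation. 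At the unramified components one additionally needs the cohomological triviality of the relevant local factor for $D_{K}$ --- the topological counterpart of the cohomological triviality of units in an unramified local extension --- since this is what makes the assembly over infinitely many $K$ legitimate.

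Finally I would reassemble the pieces: presenting the restricted product as the filtered colimit of its finite ``$S$-idèle'' subgroups and using that Tate cohomology of a finite group commutes with products and with filtered colimits, the local vanishings combine to give
\[
\widehat H^{1}\big(\mathrm{Gal}(f),I_{N,f^{-1}(\mathcal{L})}\big)\;\cong\;\bigoplus_{K\subset\mathcal{L}}\widehat H^{1}(D_{K},I_{\widetilde K_{0}})=0.
\]
The genuine obstacle, I expect, is the local analysis: whereas in the number-field setting local Hilbert 90 is handed over by local class field theory, here one must pin down the precise $\mathrm{Gal}(f)$-module structure of the topological local idèle $I_{\widetilde K_{0}}$ under the branched-cover deck transformation and check the Tate-cohomological vanishing by hand, together with the cohomological triviality of the unramified factors that controls the infinite restricted product.
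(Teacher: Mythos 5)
Your plan is sound and would yield a complete proof, but it is packaged quite differently from the paper's argument, so a comparison is worthwhile. The paper's main proof performs your semi-local reduction entirely by hand: using the Hilbert theory of \cref{HilTh} it writes $f_*$ explicitly on each block $\bigoplus_{J_i\subset f^{-1}(K)}H_1(\partial V_{J_i})$ (multiplication by $d_K$ on longitudes and by $e_K$ on meridians, summed over the fiber), deduces from $f_*(a)=0$ that $\sum_i l_i=\sum_i m_i=0$ over each $K$, and then constructs $b$ explicitly by a telescoping partial-sum device --- which is precisely the cocycle-splitting hidden inside Shapiro's lemma for an induced module over a cyclic group. (The paper also sketches a second, purely cohomological proof, closer in spirit to yours, asserting Tate-cohomological vanishing on module-theoretic grounds.) What your route buys: it isolates exactly where the geometry enters (the $D_K$-module structure of the local factor), it treats the restricted product cleanly via filtered colimits and products, and it generalizes verbatim to non-cyclic finite Galois groups, giving $H^1({\rm Gal}(f),I_{N,f^{-1}(\mathcal{L})})=0$ in general. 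What the paper's route buys: an explicit $b$, no cohomological machinery, and a visualization of the deck action on the fiber over each $K$.

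Three adjustments to your plan. First, the step you flag as the genuine obstacle is in fact the easiest point: an element of $D_K$ restricts on $\partial V_{\widetilde K_0}\cong T^2$ to a finite-order rotation in the local model, hence is isotopic to the identity and acts \emph{trivially} on $H_1(\partial V_{\widetilde K_0})\cong\mathbb{Z}^2$; consequently $\widehat H^1(D_K,\mathbb{Z}^2)=H^1(D_K,\mathbb{Z}^2)={\rm Hom}(D_K,\mathbb{Z}^2)=0$ (finite group, torsion-free trivial module), or equivalently $\ker(\mathcal{N})=0=(\sigma-1)\mathbb{Z}^2$. Unlike the arithmetic local Satz 90, no class-field-theoretic input is needed here. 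Second, do not aim at full cohomological triviality of the unramified factors: it is false, since with trivial action $\widehat H^0(D_K,\mathbb{Z}[\mu_{\widetilde K_0}])\cong\mathbb{Z}/|D_K|\mathbb{Z}\neq 0$. The assembly over infinitely many $K$ only requires $\widehat H^1$ of each unit block $\bigoplus_{\widetilde K\mid K}\mathbb{Z}[\mu_{\widetilde K}]\cong{\rm Ind}_{D_K}^{{\rm Gal}(f)}\mathbb{Z}$ to vanish, which again follows from Shapiro plus ${\rm Hom}(D_K,\mathbb{Z})=0$, for ramified and unramified $K$ alike. Third, your opening identification of $\ker f_*$ with $\ker(\sum_i\tau^i)$ is true but not formal: it requires the explicit local description of $f_*$ (longitudes scaled by $d_K$, meridians by $e_K$), i.e.\ the same Hilbert-theory computation the paper starts from, so that lemma cannot be bypassed in your write-up either.
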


Suppose in addition that $M$ is an integral homology 3-sphere ($\mathbb{Z}$HS$^3$), namely, $H_*(M)=H_*(S^3)$ holds. Put deg$f:= n$ and a link $L_0=K_1\sqcup\cdots\sqcup K_r$ of $M$. Let $e_i$ denote the branch index of each $K_i$ in $f$. A 1-cycle representing a homology class of $H_1(N)$ will be taken to be disjoint from $f^{-1}(L_0)$. 
\begin{defn}\label{def.genus.ldt}
We say that $[z],[w]\in H_1(N)$ belong to \emph{the same genus} and write as $[z]\approx[w]$ if the following holds:
$$
{\rm lk}(f_*(z),K_i)\equiv{\rm lk}(f_*(w),K_i) \ {\rm mod}\ e_{i} \mbox{ for every }i.
$$
This notion does not depend on the choice of representatives $z,w$ of classes. We define \emph{the genus number} of $f:N\to M$ by $g_f=\#(H_1(N)/(\tau-1)H_1(N))$.
\end{defn}

Another main result is the following analogue of \cref{thm.genus.nt}. 
\begin{thm}[Theorem 4.1]
Notations and assumptions being as above, consider the group homomorphism $\chi:H_1(N)\to \prod_{i=1}^r\mathbb{Z}/e_i\mathbb{Z}; [z]\mapsto ({\rm lk}(f_*(z),K_i)\mbox{ mod}\ e_i)_i$. Then,
\[
{\rm Im}\chi=\{(x_i)_i\in\prod_{i=1}^r\mathbb{Z}/e_i\mathbb{Z}\mid\sum_{i=1}^r \frac{n}{e_i}x_i=0\in\mathbb{Z}/n\mathbb{Z}\},\ {\rm Ker}\chi=(\tau-1)H_1(N),
\]
 and
 \[H_1(N)/\approx\ \  \simeq H_1(N)/(\tau-1)H_1(N) \simeq {\rm Im}\chi.
\]
In particular, $$g_f=\frac{\prod_i e_i}{n}.$$
\end{thm}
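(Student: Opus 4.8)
The plan is to follow the idèlic strategy of Iyanaga--Tamagawa essentially line by line, reducing everything to the two assertions ${\rm Im}\chi=G_0$ and ${\rm Ker}\chi=(\tau-1)H_1(N)$, where $G_0:=\{(x_i)\in\prod_i\mathbb{Z}/e_i\mathbb{Z}\mid \sum_i\frac{n}{e_i}x_i=0\in\mathbb{Z}/n\mathbb{Z}\}$. Granting these, the first isomorphism theorem gives $H_1(N)/{\approx}\,\simeq\,H_1(N)/{\rm Ker}\chi=H_1(N)/(\tau-1)H_1(N)\simeq{\rm Im}\chi=G_0$, since $[z]\approx[w]$ holds precisely when $\chi([z])=\chi([w])$. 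Finally, because $N$ is connected the classifying homomorphism below is surjective, so the meridian images $n/e_i$ generate $\mathbb{Z}/n\mathbb{Z}$ and the summation map $\prod_i\mathbb{Z}/e_i\mathbb{Z}\to\mathbb{Z}/n\mathbb{Z}$ is onto; hence $g_f=|G_0|=\prod_ie_i/n$.

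The computation of ${\rm Im}\chi$ I expect to be purely topological, playing the role of the product formula. Writing $\phi\colon H_1(M\setminus L_0)\to\mathbb{Z}/n\mathbb{Z}$ for the homomorphism classifying the cyclic cover, normalized so that the meridian $\mu_i$ of $K_i$ maps to $n/e_i$, I would use that $M$ being a $\mathbb{Z}\mathrm{HS}^3$ forces $H_1(M\setminus L_0)\cong\mathbb{Z}^r$ (by Alexander duality), with $[c]=\sum_i{\rm lk}(c,K_i)\mu_i$. For a $1$-cycle $z$ in $N$ disjoint from $f^{-1}(L_0)$, its image $f_*(z)$ lifts to the loop $z$ and therefore lies in ${\rm Ker}\phi$; this yields $\sum_i\frac{n}{e_i}{\rm lk}(f_*z,K_i)=\phi(f_*z)=0$, i.e. ${\rm Im}\chi\subseteq G_0$. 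Conversely, given $(x_i)\in G_0$ I would choose integer lifts $\ell_i\equiv x_i$, realize a cycle $c$ in $M\setminus L_0$ with ${\rm lk}(c,K_i)=\ell_i$, note $\phi(c)=\sum_i\frac{n}{e_i}\ell_i=0$ so that $c=f_*(\tilde c)$ for a cycle $\tilde c$ upstairs, and observe $\chi([\tilde c])=(x_i)$; thus $G_0\subseteq{\rm Im}\chi$.

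The heart of the argument is ${\rm Ker}\chi=(\tau-1)H_1(N)$. The inclusion $\supseteq$ is immediate from $f_*\circ\tau_*=f_*$, which gives $f_*((\tau-1)w)=0$ and hence vanishing linking numbers. For $\subseteq$ I would mimic Iyanaga--Tamagawa through the reciprocity/divisor map $\rho_N\colon I_{N,f^{-1}(\mathcal{L})}\to H_1(N)$, which is Galois-equivariant and kills both principal and unit idèles. Starting from $[z]\in{\rm Ker}\chi$ with a lift $\alpha\in I_{N,f^{-1}(\mathcal{L})}$, the genus condition ${\rm lk}(f_*z,K_i)\equiv0\bmod e_i$ should translate, via the local theory at each branch knot (where the local norm subgroup has index $e_i$), into the statement that $f_*(\alpha)$ is a local norm at every place; at unbranched knots this is automatic, and the $\mathbb{Z}\mathrm{HS}^3$ hypothesis plays the role of the trivial narrow class group and unit group of $\mathbb{Q}$. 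The Hasse norm principle of \cite{Tashiro2024} then upgrades this to a global norm, which lets me correct $\alpha$ by principal and unit idèles---invisible under $\rho_N$---to an idèle $a$ with $f_*(a)=0$ and $\rho_N(a)=[z]$. Applying the topological Satz 90 (Theorem 3.1) yields $a=(\tau-1)b$, and pushing through $\rho_N$ gives $[z]=(\tau-1)\rho_N(b)\in(\tau-1)H_1(N)$.

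The main obstacle I anticipate is exactly this translation step: making precise that the mod-$e_i$ linking-number condition is equivalent to $f_*(\alpha)$ being everywhere a local norm, and then carrying out the adjustment to an idèle of trivial norm using only principal and unit idèles. This is where the local class field theory for $(M,\mathcal{L})$, the Hasse norm principle, and the Galois-equivariance of reciprocity must be combined; once the norm-trivial representative is produced, Theorem 3.1 finishes the descent cleanly.
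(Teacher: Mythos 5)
Your overall architecture coincides with the paper's: everything is reduced to computing $\mathrm{Im}\,\chi$ and $\mathrm{Ker}\,\chi$, and for the kernel you invoke the Hasse norm principle (\cref{HNP}) plus the topological Satz 90 (\cref{Hil90.ldt}), transported to $H_1(N)$ through the isomorphism $\rho$ of \cref{IdeleHom} --- exactly the paper's Step 1. Your computation of $\mathrm{Im}\,\chi$, on the other hand, is a genuinely different and more elementary route: where the paper uses global reciprocity (\cref{GCFT}) and the maps $\psi$, $\Sigma$ of its Step 2, you use only covering-space theory (the image $f_*H_1(N\setminus f^{-1}(L_0))$ equals the kernel of the classifying map $\phi$ on $H_1(M\setminus L_0)$) together with Alexander duality for a $\mathbb{Z}$HS$^3$; that half of your plan is sound and avoids the id\`elic machinery entirely.

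The genuine gap is in $\mathrm{Ker}\,\chi\subseteq(\tau-1)H_1(N)$, precisely at the step you yourself flag as the anticipated obstacle, and it cannot be waved through. What you need is: if $\rho(\alpha)=[z]$ and $\chi([z])=0$, then $f_*(\alpha)\in f_*(U_{N,f^{-1}(\mathcal{L})})+P_{M,\mathcal{L}}$; only after this can you subtract a unit id\`ele, apply \cref{HNP} to replace the resulting principal id\`ele of $M$ by $f_*$ of a principal id\`ele of $N$, obtain an id\`ele $a$ with $f_*(a)=0$ and $\rho(a)=[z]$, and finish with \cref{Hil90.ldt}. Proving that implication is the technical heart of the paper (its Steps 2 and 3), and it needs two concrete ingredients absent from your proposal: (i) the identification $\mathrm{Ker}\,\psi=(f_*(U_{N,f^{-1}(\mathcal{L})})+P_{M,\mathcal{L}})/P_{M,\mathcal{L}}$, which rests on the splitting $I_{M,\mathcal{L}}=U_{M,\mathcal{L}}\oplus P_{M,\mathcal{L}}$ supplied by the $\mathbb{Z}$HS$^3$ hypothesis; and (ii), crucially, the explicit diagonal formula of \cref{diagonal}, which says that modulo $P_{M,\mathcal{L}}$ the pushforward of a longitude id\`ele $\sum_J l_J[\lambda_J]$ equals the meridian id\`ele $\bigl(\mathrm{lk}(f_*(\sum_J l_J\lambda_J),K)[\mu_K]\bigr)_K$. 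This is the unique point where linking numbers get tied to id\`eles, i.e., where the hypothesis ``$\chi([z])=0$'' becomes a statement about norms; invoking ``local class field theory at the branch knots'' will not produce it, because the required correction is by a \emph{globally} principal id\`ele (a Seifert-surface/$2$-chain argument), not by local data at the $K_i$.

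A final caveat, which you share with the paper rather than introduce: the normalization ``$\phi(\mu_i)=n/e_i$ for every $i$'' cannot in general be arranged. The monodromy of $\mu_i$ is merely some element of order $e_i$, say $(n/e_i)u_i$ with $u_i$ prime to $e_i$, and a single choice of generator of $\mathbb{Z}/n\mathbb{Z}$ cannot normalize all $u_i$ simultaneously (take $n=5$ and a two-component branch link with meridian monodromies $1$ and $2$). As written, your argument proves $\mathrm{Im}\,\chi=\{(x_i)\mid\sum_i(n/e_i)u_ix_i=0\in\mathbb{Z}/n\mathbb{Z}\}$; the paper's own proof makes the same tacit assumption when it asserts commutativity of the square relating the reciprocity isomorphism $\mathrm{Gal}(f)\cong\mathbb{Z}/n\mathbb{Z}$ to $\Sigma\circ\psi$. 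So the displayed description of $\mathrm{Im}\,\chi$ (unlike the count $g_f=\prod_ie_i/n$, which is unaffected) really requires a coherence hypothesis on the meridian monodromies, in both your plan and the paper.
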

This paper is organized as follows. In Section 2, we review the id\`ele group for a $3$-manifold $M$ endowed with a very admissible link $\mathcal{L}$ and the some theorems. In Section 3, we prove a new result: a topological analogue of Hilbert's Satz 90 for the id\`ele group of $M$ in two ways. In Section 4, we prove a topological analogue of the genus theory by the id\`ele group of $3$-manifolds.\\

\noindent 
\textbf{Notation and convention}. 3-manifolds are assumed to be oriented, connected, and closed. A knot is assumed to be tame. We denote by $V_K$ a tubular neighborhood of a knot $K$ and put $V_L:=\sqcup_{K\subset L}V_K$. For a manifold $M$ and its submanifold $A$, we denote by $H_{n}(M)$ and $H_{n}(M,A)$ the $n$-th homology group and the $n$-th relative homology group with coefficients in $\mathbb{Z}$. 
The branch set of a branched covering of a 3-manifold is assumed to be a finite link. 
When $f:N\rightarrow M$ is a branched Galois covering, ${\rm Gal}(f)$ denotes the Galois group of $f$. For a knot $K$ in a 3-manifold $M$, $\mu _{K}$ and $\lambda _{K}$ denotes the meridian and the (chosen) longitude of $K$ regarded as elements in $H_1(\partial V_K)$ and several other groups. When $M$ is an integral homology 3-sphere, $\lambda _{K}$ denotes the preferred longitude of $K$. When $K, K'$ are disjoint knots in an integral homology 3-sphere $M$, then ${\rm lk}(K,K')$ denotes their linking number. For each component $K$ of a link $\mathcal{L}$, the orientation of the longitude $\lambda_K \subset \partial V_K$ is the same as $K$ and the orientation of the meridian $\mu_K \subset \partial V_K$ is determined by ${\rm lk}(\mu_K,K) = 1$.

\section{Very admissible link and id\`eles} 
\label{s2} 
In this section, we recollect the notion of a very admissible link $\mathcal{L}$ in a 3-manifold $M$, the id\`ele  group and the principal id\`{e}le group of $(M,\mathcal{L})$. Also, we introduce some properties which are proved by the global reciprocity law for 3-manifolds, which is the important fact used in our main result. We consult \cite{Morishita2024}, \cite{NiiboUeki}, \cite{Tashiro2024} as basic references for this section. For an arithmetic counterpart, we refer to \cite{Neukirch}.

First, we recall the Hilbert theory for a branched cover. Let $M$ be an oriented, connected, closed 3-manifold and $L_0=K_1\sqcup \cdots \sqcup K_r\ (r\in \mathbb{Z}_{\ge1})$ a finite link in $M$. Let $f:N\to M$ be a finite Galois cover of degree $n$ branched over $L_0$ with $G:={\rm Gal}(f)$. Let $K$ be a knot in $M$ with $K\subset L_0$ or  $K\cap L_0=\emptyset$ and put $f^{-1}(K)=J_1\sqcup\cdots\sqcup J_{c_K}$, where $c_K$ denotes the number of components of $f^{-1}(K)$. 
\begin{defn}
We define \emph{the decomposition group} of $J_i\subset f^{-1}(K)$ in $f$ by $D_{J_i}:=\{g\in G\mid g(J_i)=J_i\}$,  so that we have a bijection $G/D_{J_i}\to \{J_1,\cdots,J_{c_K}\}$. Note that there is a natural surjective homomorphism $D_{J_i}\to {\rm Gal}(J_i/K); g\mapsto g|_{J_i}$ and define \emph{the inertia group} $I_{J_i}$ of $J_i$ in $f$ by its kernel. 
\end{defn}
Thus, we obtain the following exact sequence $$1\to I_{J_i}\to D_{J_i}\to {\rm Gal}(J_i/K)\to 1.$$ The orders of $I_{J_i}$ and ${\rm Gal}(J_i/K)$ coincide with \emph{the branched index} $e_K$ of $K$ and the covering degree of $J_i\to K$, respectively. 
\begin{lem}[Hilbert theory for branched covering (cf.{\cite[Theorem 5.1.1]{Morishita2024}},{\cite[Section 2]{Ueki1}})]
\label{HilTh} Let $N\to T_{J_i}\to Z_{J_i}\to M$ denote the decomposition of $f:N\to M$ corresponding to the sequence of subgroups $e\leq I_{J_i}\leq D_{J_i}\leq {\rm Gal}(f)$. Then, every knot over $K$ completely branches in $N\to T_{J_i}$, is completely inert (namely, does not branch nor decompose) in $T_{J_i}\to Z_{J_i}$, and $K$ completely decomposes into a $c_K$-component link in $Z_{J_i}\to M$. We have $e_K={\rm deg}(N\to T_{J_i})$, $d_K={\rm deg}(T_{J_i}\to Z_{J_i})$, $c_K={\rm deg}(Z_{J_i}\to M)$, and $n=c_Kd_Ke_K$. 
In particular, if ${\rm Gal}(f)$ is abelian, then these subgroups and subcovers are independent of the choice of $J_i$.
\end{lem}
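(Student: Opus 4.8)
The plan is to transcribe the classical proof of Hilbert's decomposition theory into the language of branched Galois covers, using the Galois correspondence between intermediate covers of $f\colon N\to M$ and subgroups of $G={\rm Gal}(f)$. First I would recall that, after passing to the Fox completion, every subgroup $H\le G$ determines an intermediate branched cover $N/H\to M$ with $N\to N/H$ Galois of deck group $H$; consequently ${\rm deg}(N\to N/H)=|H|$ and ${\rm deg}(N/H\to M)=[G:H]$, and degrees are multiplicative along a tower. Applying this to the chain $e\le I_{J_i}\le D_{J_i}\le G$ yields $T_{J_i}=N/I_{J_i}$ and $Z_{J_i}=N/D_{J_i}$, and reading off orders from the data already recorded ($|I_{J_i}|=e_K$, $[D_{J_i}:I_{J_i}]=|{\rm Gal}(J_i/K)|=d_K$, $[G:D_{J_i}]=c_K$) immediately gives ${\rm deg}(N\to T_{J_i})=e_K$, ${\rm deg}(T_{J_i}\to Z_{J_i})=d_K$, ${\rm deg}(Z_{J_i}\to M)=c_K$, whence $n=c_Kd_Ke_K$.

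Next I would establish the local behaviour over $K$ at each stage of the tower using the meridian--longitude description of the monodromy around $J_i$. Restricting the covering to the boundary torus $\partial V_K$ identifies $D_{J_i}$ with the image of the abelian group $\pi_1(\partial V_K)=\langle\mu_K,\lambda_K\rangle$ localized at $J_i$: the inertia group $I_{J_i}$ is the cyclic subgroup generated by the meridian monodromy, which governs the branching, and $D_{J_i}/I_{J_i}={\rm Gal}(J_i/K)$ is the cyclic quotient generated by the longitude monodromy, which governs the fibrewise covering $J_i\to K$. For $N\to T_{J_i}$, since this sub-cover is Galois with group equal to the inertia $I_{J_i}$ and $I_{J_i}\subseteq D_{J_i}$ stabilizes $J_i$, the knot $J_i$ is the unique component lying over its image $\widehat J\subset T_{J_i}$ and its branch index equals $|I_{J_i}|=e_K={\rm deg}(N\to T_{J_i})$; hence the knots over $K$ totally branch there. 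For $T_{J_i}\to Z_{J_i}$, which is Galois with cyclic group $D_{J_i}/I_{J_i}$, the meridian monodromy has already been absorbed, so the cover is unbranched over the image of $J_i$, while $D_{J_i}$ stabilizes $J_i$ so there is no splitting either; thus it is completely inert with fibre degree $d_K$. Finally, in $Z_{J_i}\to M$ the longitude monodromy has also been absorbed, so the distinguished component $\widetilde J$ below $J_i$ maps homeomorphically onto $K$, i.e.\ $K$ is split along $\widetilde J$.

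The last point to address is the count of split components and the final independence assertion. Since $f^{-1}(K)$ has $c_K=[G:D_{J_i}]$ components in $N$ permuted transitively through the bijection $G/D_{J_i}\to\{J_1,\dots,J_{c_K}\}$, the components of the preimage of $K$ in $Z_{J_i}=N/D_{J_i}$ correspond to the double cosets $D_{J_i}\backslash G/D_{J_i}$, each of which is split by the argument above. When $G$ is abelian this is the decisive simplification: $D_{J_i}$ is then normal, so $Z_{J_i}\to M$ is itself Galois with group $G/D_{J_i}$ of order $c_K$, the double cosets collapse to the $c_K$ cosets, and $K$ decomposes into a genuine $c_K$-component split link; moreover $D_{gJ_i}=gD_{J_i}g^{-1}=D_{J_i}$ and likewise $I_{gJ_i}=I_{J_i}$ for every $g\in G$, so the subgroups and hence the covers $T_{J_i},Z_{J_i}$ are independent of the choice of $J_i$.

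I expect the main obstacle to be the careful justification of the Galois correspondence and the local models at the branch locus after Fox completion, in particular verifying that quotients by subgroups remain admissible branched covers with freely acting deck groups away from the branch set and that the meridian/longitude monodromy genuinely identifies the inertia and decomposition data with the branch index and the fibrewise covering degree. A secondary subtlety is the normality issue in the component count, which is what forces the clean ``$c_K$-component'' splitting statement to be read along the distinguished component in general and to become literal precisely in the abelian case relevant to the genus formula.
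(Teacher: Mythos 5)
The paper itself contains no proof of this lemma: it is imported from the literature (Morishita, Theorem 5.1.1; Ueki, Section 2) and used as a black box, so there is no in-paper argument to compare against. Your proposal reconstructs the standard Hilbert-theory proof that those references contain: the Galois correspondence for the tower $N \to N/I_{J_i} \to N/D_{J_i} \to M$, the identification of $D_{J_i}$ with the deck group of the torus covering $\partial V_{J_i} \to \partial V_K$ (so that the meridian image generates $I_{J_i}$ and the longitude image generates $D_{J_i}/I_{J_i} \cong \mathrm{Gal}(J_i/K)$ --- note this also shows $D_{J_i}$ is automatically abelian in the $3$-manifold setting), and the index count $n = c_K d_K e_K$. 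That core is sound, and you are right that the components over $K$ in $Z_{J_i}$ are indexed by the double cosets $D_{J_i}\backslash G/D_{J_i}$.

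One sentence in your third paragraph is, however, false as written: the double-coset components are \emph{not} each split. The monodromy-absorption argument you gave applies only to the distinguished component, namely the image of $J_i$, corresponding to the trivial double coset. When $D_{J_i}$ is not normal in $G$ one has $\#(D_{J_i}\backslash G/D_{J_i}) < [G:D_{J_i}] = c_K$, and the non-distinguished components cover $K$ with degree greater than $1$; the arithmetic model is $G = S_3$ with $D$ generated by a transposition, where $p$ splits in the cubic decomposition field into two primes of residue degrees $1$ and $2$. The same caveat affects your claim that \emph{every} knot over $K$ completely branches in $N \to T_{J_i}$: your argument establishes this for $J_i$ itself (inertia $I_{J_j}\cap I_{J_i}$ need not be all of $I_{J_j}$ in general), and it extends to all $J_j$ only when the inertia groups are conjugation-invariant, e.g.\ in the abelian case. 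Your closing paragraph states precisely this resolution, so this is an internal inconsistency you already know how to repair rather than a missing idea --- but a final write-up must restate the third paragraph accordingly, and the lemma itself should be read with the same caveat (harmless for the paper, which only ever applies it to cyclic covers).
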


Now, we recall the notion of a very admissible link $\mathcal{L}$ of 3-manifold $M$ and the id\`ele and principal id\`ele groups of $M$.
\begin{defn}
Let $M$ be an oriented connected closed 3-manifold and let $\mathcal{L}$ be a link of $M$ consisting of countably many (finite or infinite) tame components. We call  $\mathcal{L}$  a {\em very admissible link} if $\mathcal{L}$ satisfies the following condition: For any finite cover $f:N\rightarrow M$ branched over a finite sublink $L_0$ of $\mathcal{L}$, $H_{1}(N)$ is generated by the homology classes of components of $f^{-1}(\mathcal{L})$.
\end{defn}
By Definition 1.1, if $\mathcal{L}$ is a very admissible link of $M$ and $f:N \rightarrow M$ is a finite covering branched over a finite sublink $L_0$ of $\mathcal{L}$, then $f^{-1}(\mathcal{L})$ is again a very admissible link of $N$. The following theorem is fundamental.

\begin{prop}[{\cite[Theorem 2.3]{NiiboUeki}}]\label{exVAL} 
Let $M$ be an oriented, connected, closed 3-manifold and $L$ a link of $M$ consisting countably many tame components. Then, there exists a very admissible link $\mathcal{L}\subset M$ containing $L$. In particular, for any $M$, there exists a very admissible link $\mathcal{L}$.
\end{prop}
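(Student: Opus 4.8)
The plan is to build $\mathcal{L}$ as an increasing union $\mathcal{L}=\bigcup_{k\ge0}\mathcal{L}^{(k)}$ of countable tame links, starting from $\mathcal{L}^{(0)}=L$ and adjoining at each stage new knots that ``capture'' the homology of the relevant covers. The structural point making this work is that the defining condition of a very admissible link is \emph{monotone} under enlargement: if, for a cover $f:N\to M$ branched over a finite sublink $L_0$, the classes of the components of $f^{-1}(\mathcal{L}')$ already generate $H_1(N)$ for some $\mathcal{L}'\subseteq\mathcal{L}$, then a fortiori the classes of the components of $f^{-1}(\mathcal{L})$ generate $H_1(N)$, since passing to a larger link only supplies additional generators. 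Hence it suffices to guarantee that every cover branched over a finite sublink of the \emph{final} $\mathcal{L}$ is already handled at some finite stage of the construction.

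The second ingredient is a counting observation together with a general-position step. For a fixed finite link $L_0$, the complement $M\setminus V_{L_0}$ is a compact $3$-manifold with boundary, so $\pi_1(M\setminus L_0)$ is finitely generated; consequently there are only countably many finite branched covers of $M$ branched over $L_0$, each arising as the Fox completion of a finite cover of the complement classified by a finite-index subgroup. As each $\mathcal{L}^{(k)}$ is countable, it has countably many finite sublinks, so the family of all finite covers branched over finite sublinks of $\mathcal{L}^{(k)}$ is countable. For each such cover $f:N\to M$, the group $H_1(N)$ is finitely generated ($N$ is closed), so I fix finitely many generating classes. By general position in the $3$-manifold $N$, each generating class is represented by an embedded knot $\gamma\subset N$, disjoint from $f^{-1}(L_0)$, on which $f$ restricts to a homeomorphism onto an embedded knot $K:=f(\gamma)\subset M$ with $K\cap L_0=\emptyset$: two arcs generically miss each other in a $3$-manifold and $f$ is a local homeomorphism off the branch locus, so both embeddedness of $K$ and injectivity of $f|_\gamma$ hold generically. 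Then $\gamma$ is a single component of $f^{-1}(K)$, whence the chosen class lies in the subgroup generated by the components of $f^{-1}(K)$. Collecting these knots $K$ over all generators and all covers branched over sublinks of $\mathcal{L}^{(k)}$ gives a countable family, which I perturb to be pairwise disjoint, disjoint from $\mathcal{L}^{(k)}$, and tame, and I set $\mathcal{L}^{(k+1)}$ to be $\mathcal{L}^{(k)}$ together with these knots; this keeps $\mathcal{L}^{(k+1)}$ a countable tame link.

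I would then conclude by the exhaustion argument. Put $\mathcal{L}=\bigcup_k\mathcal{L}^{(k)}$, a countable tame link containing $L$. Any finite sublink $L_0\subseteq\mathcal{L}$ involves only finitely many components, each appearing at some finite stage, so $L_0\subseteq\mathcal{L}^{(k)}$ for some $k$. Any finite cover $f:N\to M$ branched over $L_0$ was therefore treated at stage $k+1$, where knots were adjoined so that the components of $f^{-1}(\mathcal{L}^{(k+1)})$ generate $H_1(N)$; by the monotonicity above the components of $f^{-1}(\mathcal{L})$ then generate $H_1(N)$ as well. Thus $\mathcal{L}$ is very admissible and contains $L$, and the final clause follows by taking $L$ to be empty (or any single tame knot).

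The part I expect to demand the most care is the general-position step: realizing an arbitrary class of $H_1(N)$ by an embedded knot that simultaneously maps homeomorphically onto an embedded knot of $M$ disjoint from the branch locus, done compatibly for all finitely many generators of a given cover and kept disjoint from the knots already placed. Closely tied to this is the point-set bookkeeping of assembling the countably many knots adjoined across all stages into a genuine tame link, i.e.\ a family of pairwise disjoint embedded circles, which I would organize by an inductive perturbation scheme arranging that each newly added knot avoids all previously chosen ones while the total collection remains countable.
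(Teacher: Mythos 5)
The paper gives no proof of this proposition at all---it is imported verbatim from \cite[Theorem 2.3]{NiiboUeki}---and your argument reconstructs essentially that source's strategy: iterative enlargement $\mathcal{L}^{(0)}=L\subset\mathcal{L}^{(1)}\subset\cdots$, countability of the finite covers branched over finite sublinks (via Fox completion and finite generation of $\pi_1(M\setminus L_0)$), general position to represent each generator of $H_1(N)$ by an embedded knot $\gamma$ with $f|_{\gamma}$ injective so that $f(\gamma)$ is a knot in $M$ having $\gamma$ as a preimage component, and the monotonicity--exhaustion argument for the union. Your proposal is correct, and the two points you flag as delicate (achieving injectivity of $f|_{\gamma}$, and making each new knot avoid the countably many circles already present, which needs a Baire-type genericity argument rather than finitely many perturbations since $\mathcal{L}^{(k)}$ may be infinite) are exactly the technical care points of the original proof.
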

Hereafter, we fix a very admissible link $\mathcal{L}$ in a 3-manifold $M$ and state id\`ele theory for a pair $(M,\mathcal{L})$. We may assume that $\mathcal{L}$ is endowed with tubular neighborhoods. Indeed, by the method of blow-up, we may essentially assume that $\mathcal{L}$ is endowed with a tubular neighborhood $V_\mathcal{L}=\sqcup_{K\subset \mathcal{L}} V_K$, which is the disjoint union of tori (cf.~\cite{NiiboUeki2023RMS}). Note that we may instead consider families of tubular neighborhoods with natural identifications of their groups (cf.~\cite{NiiboUeki}), or work over the system of formal tubular neighborhoods as well (cf.~\cite{Mihara2019Canada}).

\begin{defn} We define the {\em id\`ele group} of $(M,\mathcal{L})$ by
$$I_{M,\mathcal{L}}:=\{(a_{K})_{K}\in \underset{K\subset \mathcal{L}}{\prod}H_{1}(\partial V_{K})\mid a_K\in \mathbb{Z}[\mu_K]\mbox{ for almost all} \  K \subset \mathcal{L}\},$$
where $K \subset \mathcal{L}$ runs through all components of $\mathcal{L}$. An element of $I_{M,\mathcal{L}}$ is called an {\em id\`{e}le} of $(M,\mathcal{L})$.
\end{defn}

In order to define the principal id\`ele group, we define the $H_{2}(M,\mathcal{L})$ and construct the $\Delta _{M,\mathcal{L}}:H_2(M,\mathcal{L})\to I_{M,\mathcal{L}}$. For any finite sublinks $L,L'$ of $ \mathcal{L}$ with $L\subset L'$, there exists a natural injection $j_{L,L'}:H_{2}(M,L)\hookrightarrow H_{2}(M,L')$. Then $((H_2(M,L))_{L \subset \mathcal{L}}, (j_{L,L'})_{L \subset L'\subset \mathcal{L}})$ forms a direct system.

\begin{defn}
We define $H_{2}(M,\mathcal{L})$ by the following direct limit:
$$
H_{2}(M,\mathcal{L}):=\underset{L\subset \mathcal{L}}{\varinjlim}H_{2}(M,L)
=\underset{L\subset \mathcal{L}}{\bigsqcup}H_{2}(M,L)/\thicksim.\\
$$
Here, for $S_{L}\in H_{2}(M,L)$ and $S_{L'}\in H_{2}(M,L')$, we write $S_{L}\thicksim S_{L'}$ if and only if there exists a finite link $ L''\subset \mathcal{L}$ such that $L\cup L'\subset L'' \ \mbox{and}\   j_{L,L''}(S_{L})=j_{L',L''}(S_{L'})$.
\end{defn}
Let $L$ be a finite sublink of $\mathcal{L}$. We put $V_{L}:=\bigsqcup _{K\subset L}V_{K}$ and $X_{L}:=M\setminus {\rm Int}(V_L)$ $(\simeq M\setminus L)$. By the excision isomorphism ${\rm exc}$ and the relative homology exact sequence for a pair $(M,V_{L})$, we obtain a sequence
$$
H_{2}(M,L)\overset{\cong}{\rightarrow} H_{2}(M,V_{L})\overset{{\rm exc}}{\rightarrow} H_{2}(X_{L},\partial V_{L})\overset{\partial}{\rightarrow} H_{1}(\partial V_{L}).
\leqno{(1.6)}
$$
Moreover, for any finite links $L,L'\subset \mathcal{L}$ with $L\subset L'$, we have a commutative diagram
$$
\begin{array}{ccc}

H_{2}(M,L') & \overset{\partial}{\rightarrow} & H_{1}(\partial V_{L'})\\

j_{L,L'}\uparrow &\circlearrowright & \downarrow p_{L',L} \\

H_{2}(M,L) & \overset{\partial}{\rightarrow} & H_{1}(\partial V_{L}).

\end{array}
\leqno{(1.7)}
$$
Taking the direct limit for $H_2(M,L)$ and the projective limit for $H_1(V_L)$ with respect to finite sublinks $L \subset \mathcal{L}$, we obtain a natural homomorphism $\Delta_{M,\mathcal{L}}:H_{2}(M,\mathcal{L})\rightarrow I_{M,\mathcal{L}}$ called the \emph{diagonal map}. 
\setcounter{theoremcounter}{7}
\begin{defn}
We define the {\em principal id\`ele group} of $(M,\mathcal{L})$ by
$$
P_{M,\mathcal{L}}:={\rm Im}(\Delta_{M,\mathcal{L}}).
$$
An element of $P_{M,\mathcal{L}}$ is called a principal id\`{e}le of $(M,\mathcal{L})$.
Also, we define the {\em id\`ele class group} of $(M,\mathcal{L})$ by
$$
C_{M,\mathcal{L}}:=I_{M,\mathcal{L}}/P_{M,\mathcal{L}}
$$
An element of $C_{M,\mathcal{L}}$ is called an id\`{e}le class of $(M,\mathcal{L})$.
\end{defn}
Now, we introduce some properties on the id\`ele group of 3-manifolds. The following theorems will be used in Section 4.
\begin{lem}[{\cite[Theorem 5.4]{NiiboUeki}}]\label{GCFT}
For any finite abelian covering $f:N\to M$ branched over a finite sublink $L_0$ of $\mathcal{L}$, we have an isomorphism 
$$
C_{M,\mathcal{L}}/h_*(C_{N,f^{-1}\mathcal{L}})\cong {\rm Gal}(f).
$$
\end{lem}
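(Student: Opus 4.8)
The plan is to prove the stated isomorphism, which is the global reciprocity (Artin) law for $(M,\mathcal{L})$, by constructing an explicit reciprocity map $\rho_f\colon I_{M,\mathcal{L}}\to{\rm Gal}(f)$ and showing it descends to the asserted isomorphism, in close parallel with the idèle-theoretic proof over number fields. Since $f$ is unbranched over $X_{L_0}=M\setminus{\rm Int}(V_{L_0})$ and ${\rm Gal}(f)$ is abelian, $f$ corresponds to a surjection $\phi\colon H_1(X_{L_0})\twoheadrightarrow{\rm Gal}(f)$. For each component $K\subset\mathcal{L}$ the inclusion $\iota_K\colon\partial V_K\hookrightarrow X_{L_0}$ yields a local map $\rho_K:=\phi\circ(\iota_K)_*\colon H_1(\partial V_K)\to{\rm Gal}(f)$; here $\rho_K(\mu_K)$ is the inertia generator and $\rho_K(\lambda_K)$ the Frobenius of $K$, so $\rho_K$ kills $\mathbb{Z}[\mu_K]$ whenever $K$ is unramified, i.e. for all but the finitely many $K\subset L_0$. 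I would then set $\rho_f((a_K)_K):=\sum_K\rho_K(a_K)$; the idèle condition that $a_K\in\mathbb{Z}[\mu_K]$ for almost all $K$ guarantees all but finitely many summands vanish, so $\rho_f$ is a well-defined homomorphism.

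Next I would check the three properties that make $\rho_f$ descend. Surjectivity is the topological Chebotarev/generation statement and follows directly from very admissibility: as $H_1(N)$ is generated by the classes of the components of $f^{-1}(\mathcal{L})$, the Frobenius and inertia elements $\{\rho_K(\lambda_K),\rho_K(\mu_K)\}_{K\subset\mathcal{L}}$ generate ${\rm Gal}(f)$. That $\rho_f$ annihilates $P_{M,\mathcal{L}}$ is the topological product formula: for $S\in H_2(M,\mathcal{L})$ represented in $H_2(X_L,\partial V_L)$ via the excision/pair sequence (1.6) one has $\Delta_{M,\mathcal{L}}(S)=\partial S\in H_1(\partial V_L)$, and
\[
\rho_f(\Delta_{M,\mathcal{L}}(S))=\phi\big((\iota_L)_*\,\partial S\big)=0,
\]
because exactness of the pair sequence for $(X_L,\partial V_L)$ gives $(\iota_L)_*\,\partial S=0$ in $H_1(X_L)$. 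Finally $\rho_f\circ f_*=0$, where $f_*=h_*$ is the pushforward: on the component over a knot $J\subset f^{-1}(K)$ the composite equals $\phi\circ f_*\colon H_1(\partial V_J)\to H_1(X_{L_0})\to{\rm Gal}(f)$, and a loop already lying in the cover $N$ has trivial monodromy, whence $\phi\circ f_*=0$. Thus $\rho_f$ induces a surjection $\bar\rho_f\colon C_{M,\mathcal{L}}/f_*(C_{N,f^{-1}\mathcal{L}})\twoheadrightarrow{\rm Gal}(f)$.

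It remains to prove that $\bar\rho_f$ is injective, equivalently the first inequality $\#\big(C_{M,\mathcal{L}}/f_*(C_{N,f^{-1}\mathcal{L}})\big)\le n$, and this is the main obstacle — the topological counterpart of the hard direction of class field theory. I would first reduce to ${\rm Gal}(f)=\langle\tau\rangle$ cyclic by filtering the abelian group through a tower of cyclic subcovers, using the Hilbert theory of \cref{HilTh} to track inertia and decomposition groups along the tower. In the cyclic case I would identify the quotient with the Tate group $\widehat H^0({\rm Gal}(f),C_{N,f^{-1}\mathcal{L}})=(C_{N,f^{-1}\mathcal{L}})^{{\rm Gal}(f)}/f_*(C_{N,f^{-1}\mathcal{L}})$, using that $f_*$ is the norm $\sum_{g}g$ (consistent with the description $\ker f_*=(\tau-1)I_{N,f^{-1}\mathcal{L}}$ in Theorem 3.1) and the fixed-point identity $(C_{N,f^{-1}\mathcal{L}})^{{\rm Gal}(f)}=C_{M,\mathcal{L}}$; the latter I would verify from $(I_{N,f^{-1}\mathcal{L}})^{{\rm Gal}(f)}=I_{M,\mathcal{L}}$ together with an $H_2$-analogue of Hilbert's Satz 90 for the principal part.

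To bound $\#\widehat H^0({\rm Gal}(f),C_{N,f^{-1}\mathcal{L}})$ I would use the short exact sequence $0\to P_{N,f^{-1}\mathcal{L}}\to I_{N,f^{-1}\mathcal{L}}\to C_{N,f^{-1}\mathcal{L}}\to0$ of ${\rm Gal}(f)$-modules. By \cref{HilTh} the idèle group decomposes as a direct sum of modules induced from the decomposition groups $D_J$, so by Shapiro's lemma its Tate cohomology is computed knot-by-knot: the unramified components contribute trivially, while each ramified $K_i\subset L_0$ contributes through its inertia of order $e_i$. Combined with the idèlic Hilbert Satz 90 (Theorem 3.1, which is elementary and independent of the present statement) giving $\widehat H^1({\rm Gal}(f),I_{N,f^{-1}\mathcal{L}})=0$, the long exact cohomology sequence reduces the bound to $\widehat H^0$ of the idèle group and $\widehat H^1$ of the principal idèle group; the latter I would control via the long exact homology sequence of $(N,f^{-1}\mathcal{L})$ and Poincaré–Lefschetz duality on $N$. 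Assembling these yields $\#\widehat H^0({\rm Gal}(f),C_{N,f^{-1}\mathcal{L}})\le n$, and together with the surjection $\bar\rho_f$ this forces $\bar\rho_f$ to be an isomorphism. I expect the control of the principal part $\widehat H^1({\rm Gal}(f),P_{N,f^{-1}\mathcal{L}})$ — the genuinely $3$-manifold input — to be the delicate point of the whole argument.
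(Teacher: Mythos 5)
First, note that the paper does not prove this statement at all: it is quoted verbatim as \cite[Theorem 5.4]{NiiboUeki} (the global reciprocity law of Niibo--Ueki), so your attempt has to be measured against that proof rather than anything in the present text.

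The first half of your proposal is essentially correct and matches the standard construction: defining $\rho_f=\sum_K\phi\circ(\iota_K)_*$, observing that unramified meridians bound meridian disks in $X_{L_0}$ (so the sum is finite), killing $P_{M,\mathcal{L}}$ via exactness of the pair sequence for $(X_L,\partial V_L)$, and killing $f_*(I_{N,f^{-1}\mathcal{L}})$ by covering space theory. (One small slip: surjectivity needs very admissibility of $\mathcal{L}$ in $M$ --- i.e.\ that $H_1(M)$, hence $H_1(X_{L_0})$ together with the meridians of $L_0$, is generated by the components of $\mathcal{L}$ --- not the generation of $H_1(N)$ that you invoke.) The genuine gap is your injectivity argument. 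You model it on the ``hard direction'' of number-theoretic class field theory, and you explicitly leave its crux --- the bound on $\widehat H^1({\rm Gal}(f),P_{N,f^{-1}\mathcal{L}})$ --- unproven; a proof sketch that defers exactly the step carrying all the difficulty is not a proof. Worse, the route itself does not transplant: your reduction to $\widehat H^0({\rm Gal}(f),C_{N,f^{-1}\mathcal{L}})$ rests on the fixed-point identity $(C_{N,f^{-1}\mathcal{L}})^{{\rm Gal}(f)}=C_{M,\mathcal{L}}$, which in number theory comes from Galois descent along the inclusion $I_F\subset I_E$. Topologically there is no such inclusion, only the pushforward $f_*$ and a transfer, and the local computation fails at ramified components: a deck transformation restricted to a boundary torus $\partial V_J$ acts trivially on $H_1(\partial V_J)$, so for a totally ramified knot the fixed submodule is all of $H_1(\partial V_J)\cong\mathbb{Z}^2$, strictly larger than the transfer image. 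So the Tate-cohomological bookkeeping you propose is not merely incomplete; its foundation is unsound in this category.

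What actually proves the lemma (and is how Niibo--Ueki prove it) is much softer, and needs no analogue of the first or second inequality: class field theory \emph{trivializes} under the dictionary, with covering space theory doing all the work. Put $Y_0:=N\setminus{\rm Int}(V_{f^{-1}(L_0)})$. By \cref{IdeleHom} one has $I_{M,\mathcal{L}}/(P_{M,\mathcal{L}}+U_{\mathcal{L}\setminus L_0})\cong H_1(X_{L_0})$, and since $f$ is an abelian covering unbranched outside $L_0$, the Galois correspondence gives
\[
{\rm Gal}(f)\cong H_1(X_{L_0})/\,{\rm im}\bigl(H_1(Y_0)\to H_1(X_{L_0})\bigr).
\]
For unramified $K$ the meridian lifts with $e_K=1$, so $U_{\mathcal{L}\setminus L_0}\subset f_*(I_{N,f^{-1}\mathcal{L}})$; hence $C_{M,\mathcal{L}}/f_*(C_{N,f^{-1}\mathcal{L}})=I_{M,\mathcal{L}}/(P_{M,\mathcal{L}}+f_*(I_{N,f^{-1}\mathcal{L}}))$ is the quotient of $H_1(X_{L_0})$ by the image of $f_*(I_{N,f^{-1}\mathcal{L}})$. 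That this image equals ${\rm im}(H_1(Y_0)\to H_1(X_{L_0}))$ uses precisely the very admissibility of $f^{-1}(\mathcal{L})$ in $N$: $H_1(Y_0)$ is generated by the classes of components of $f^{-1}(\mathcal{L})$ together with meridians of $f^{-1}(L_0)$, i.e.\ by the images of the groups $H_1(\partial V_J)$, which are exactly the local components of id\`eles of $N$. Combining the three displayed identifications yields the isomorphism at once. I would encourage you to rework the injectivity step along these lines; your cohomological program, besides being unfinished, obscures the point that in the topological setting there is no hard direction to prove.
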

For a finite sublink $L\subset\mathcal{L}$, we define the subgroup $U_{\mathcal{L}\setminus L}$ of $I_{M,\mathcal{L}}$ by $U_{\mathcal{L}\setminus L} :=\prod _{K\subset \mathcal{L} \setminus L}\mathbb{Z}[m_{K}]$. Especially, if $L=\emptyset$, then we call $U_{M,\mathcal{L}}:=U_{\mathcal{L}\setminus\emptyset}$ the {\em unit id\`ele group} of $(M,\mathcal{L})$.
\begin{lem}[{\cite[Lemma 5.7]{NiiboUeki}}] \label{IdeleHom}
We have an isomorphism
$$
I_{M,\mathcal{L}}/(P_{M,\mathcal{L}}+U_{\mathcal{L}\setminus L})\cong H_{1}(X_{L}).
$$
In particular, if $L=\emptyset$, then $I_{M,\mathcal{L}}/(P_{M,\mathcal{L}}+U_{M,\mathcal{L}})\cong H_1(M)$ holds. Moreover, if $M$ is an integral homology 3-sphere, then $I_{M,\mathcal{L}}=P_{M,\mathcal{L}}\oplus U_{M,\mathcal{L}}$. 
\end{lem}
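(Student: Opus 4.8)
The plan is to realize the quotient as the image of an explicit total-inclusion map and to identify its kernel. Write $\iota_K\colon \partial V_K\hookrightarrow X_L$ for the inclusion of each boundary torus (for $K\subset L$ this is a boundary component of $X_L$, while for $K\subset\mathcal{L}\setminus L$ the solid torus $V_K$ lies in the interior of $X_L$), and define
$$\Psi\colon I_{M,\mathcal{L}}\longrightarrow H_1(X_L),\qquad (a_K)_K\longmapsto \sum_{K\subset\mathcal{L}}(\iota_K)_*(a_K).$$
This is well defined because for almost all $K$ we have $a_K\in\mathbb{Z}[\mu_K]$ with $K\subset\mathcal{L}\setminus L$, and then $\mu_K$ bounds a meridian disk inside $V_K\subset X_L$, so $(\iota_K)_*(a_K)=0$; hence the displayed sum is finite. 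It then suffices to show that $\Psi$ is surjective and that $\ker\Psi=P_{M,\mathcal{L}}+U_{\mathcal{L}\setminus L}$.

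For surjectivity I would use the map $q\colon H_1(X_L)\to H_1(M)$ induced by $X_L\hookrightarrow M$. Pushing $1$-cycles off the $1$-complex $L$ shows that $q$ is surjective, while filling the solid tori $V_K$ $(K\subset L)$ back in, via Mayer--Vietoris, shows that $\ker q$ is generated by the meridians $\mu_K$, $K\subset L$; each such meridian equals $(\iota_K)_*(\mu_K)\in{\rm Im}\,\Psi$. On the other hand, for every $K\subset\mathcal{L}$ the longitude yields $(\iota_K)_*(\lambda_K)\in{\rm Im}\,\Psi$ with $q\bigl((\iota_K)_*(\lambda_K)\bigr)=[K]$, and by very admissibility the classes $[K]$ generate $H_1(M)$. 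Lifting an arbitrary class of $H_1(X_L)$ along $q$ by longitudes and correcting the resulting error by meridians then gives ${\rm Im}\,\Psi=H_1(X_L)$.

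The inclusions $U_{\mathcal{L}\setminus L}\subset\ker\Psi$ and $P_{M,\mathcal{L}}\subset\ker\Psi$ form the easy half: the first is the meridian computation above, and for the second a principal id\`ele $\Delta_{M,\mathcal{L}}(S)$ with $S$ represented by $S_{L'}\in H_2(M,L')$ $(L\subset L')$ satisfies that $\Psi\bigl(\Delta_{M,\mathcal{L}}(S)\bigr)$ is the image of $(\iota^{X_{L'}})_*(\partial S_{L'})$ under $H_1(X_{L'})\to H_1(X_L)$, which vanishes because $\partial S_{L'}\in\ker\bigl((\iota^{X_{L'}})_*\bigr)={\rm Im}\,\partial$ by the sequence $(1.6)$ together with the long exact sequence of $(X_{L'},\partial V_{L'})$. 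I expect the reverse inclusion $\ker\Psi\subset P_{M,\mathcal{L}}+U_{\mathcal{L}\setminus L}$ to be the main obstacle. Given $a\in\ker\Psi$, I would first subtract its meridian components at the components of $\mathcal{L}\setminus L$, which lie in $U_{\mathcal{L}\setminus L}$, reducing to an id\`ele supported on a finite sublink $L''\supset L$ and determining a class $b\in H_1(\partial V_{L''})$. The relation $\Psi(a)=0$ says $b$ dies in $H_1(X_L)$; since $\ker\bigl(H_1(X_{L''})\to H_1(X_L)\bigr)$ is generated by the meridians of $L''\setminus L$, subtracting a further element of $U_{\mathcal{L}\setminus L}$ lets us assume $(\iota^{X_{L''}})_*(b)=0$ in $H_1(X_{L''})$. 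By $(1.6)$ and the long exact sequence of $(X_{L''},\partial V_{L''})$ this places $b$ in ${\rm Im}\bigl(\partial\colon H_2(M,L'')\to H_1(\partial V_{L''})\bigr)$, so the remaining id\`ele is $\Delta_{M,\mathcal{L}}(S)$ for some $S\in H_2(M,\mathcal{L})$, whence $a\in P_{M,\mathcal{L}}+U_{\mathcal{L}\setminus L}$.

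Finally I would deduce the two displayed special cases. Taking $L=\emptyset$ gives $X_\emptyset=M$ and $U_{\mathcal{L}\setminus\emptyset}=U_{M,\mathcal{L}}$, so the isomorphism reads $I_{M,\mathcal{L}}/(P_{M,\mathcal{L}}+U_{M,\mathcal{L}})\cong H_1(M)$. When $M$ is a $\mathbb{Z}$HS$^3$ the right-hand side is $0$, hence $I_{M,\mathcal{L}}=P_{M,\mathcal{L}}+U_{M,\mathcal{L}}$, and it remains to check $P_{M,\mathcal{L}}\cap U_{M,\mathcal{L}}=0$. Here $H_2(M)=H_1(M)=0$ forces $H_2(M,L')\cong H_1(L')$, so the Seifert surfaces $\Sigma_K$ form a basis and $H_2(M,\mathcal{L})=\bigoplus_{K\subset\mathcal{L}}\mathbb{Z}\,\Sigma_K$; a direct computation gives $\Delta_{M,\mathcal{L}}(\Sigma_K)=\lambda_K+\sum_{K'\ne K}{\rm lk}(K,K')\,\mu_{K'}$. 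Consequently the longitude coefficient at $K$ of $\Delta_{M,\mathcal{L}}\bigl(\sum_K n_K\Sigma_K\bigr)$ equals $n_K$; if such a principal id\`ele lies in $U_{M,\mathcal{L}}$, all its longitude coefficients vanish, forcing every $n_K=0$ and hence the id\`ele itself to be $0$. This yields the asserted direct sum.
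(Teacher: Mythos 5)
Your proposal is correct, but there is nothing in the paper itself to compare it against: \cref{IdeleHom} is stated without proof, being imported verbatim from \cite[Lemma 5.7]{NiiboUeki}, and your argument is essentially a reconstruction of the proof in that source --- define the total inclusion map $\Psi\colon I_{M,\mathcal{L}}\to H_1(X_L)$, deduce surjectivity from very admissibility (the classes of components of $\mathcal{L}$ generate $H_1(M)$, lifted by longitudes and corrected by meridians of $L$), and identify $\ker\Psi$ with $P_{M,\mathcal{L}}+U_{\mathcal{L}\setminus L}$ by reducing an arbitrary idèle to one supported on a finite sublink $L''\supset L$ and invoking the sequence (1.6). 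One step needs a small correction: at the end of the kernel computation you assert that the remaining idèle (supported on $L''$, whose class in $H_1(\partial V_{L''})$ bounds) \emph{equals} $\Delta_{M,\mathcal{L}}(S)$. In general it does not: the diagonal map is defined through the limits over \emph{all} finite sublinks, so $\Delta_{M,\mathcal{L}}(S)$ carries nonzero meridian components at the components $K\subset\mathcal{L}\setminus L''$ (the linking-number tail made explicit in \cref{diagonal}), whereas your remaining idèle vanishes there. The fix is one line: the two idèles agree on $L''$, and off $L''$ both have components in $\mathbb{Z}[\mu_K]$, so their difference lies in $U_{\mathcal{L}\setminus L''}\subset U_{\mathcal{L}\setminus L}$, and the desired membership $a\in P_{M,\mathcal{L}}+U_{\mathcal{L}\setminus L}$ survives. (Notably, in your treatment of the integral homology $3$-sphere case you \emph{do} include this tail, writing $\Delta_{M,\mathcal{L}}(\Sigma_K)=\lambda_K+\sum_{K'\neq K}\mathrm{lk}(K,K')\mu_{K'}$, which matches \cref{diagonal} up to its sign convention; since only the vanishing of the longitude coefficients is used to get $P_{M,\mathcal{L}}\cap U_{M,\mathcal{L}}=0$, that part is fine as written.)
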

\begin{lem}[{\cite[Theorem 3.1]{Tashiro2024}}]\label{HNP}
Let $M$ be an integral homology 3-sphere. For any finite cyclic covering $f:N\to M$ branched over a finite sublink $L_0$ of $\mathcal{L}$, we have the following formula
$$
f_*(I_{N,f^{-1}(\mathcal{L})})\cap P_{M,\mathcal{L}}=f_*(P_{N,f^{-1}(\mathcal{L})}).
$$
\end{lem}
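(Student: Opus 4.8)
The plan is to mirror the idèle-theoretic proof of the Hasse norm theorem for cyclic number fields, replacing the norm $\mathrm N_{E/F}$ by the pushforward $f_\ast$ and feeding in the already-established topological Hilbert Satz 90 (Theorem 3.1) together with the global reciprocity isomorphism (\cref{GCFT}). First I would record the structural facts from idèle theory that make $f_\ast$ behave like a norm. Writing $G=\mathrm{Gal}(f)=\langle\tau\rangle$ of order $n$, the pullback $f^\ast\colon I_{M,\mathcal L}\to I_{N,f^{-1}(\mathcal L)}$ is injective with image the invariants $(I_{N,f^{-1}(\mathcal L)})^{G}$, it satisfies $f^\ast f_\ast = N_G:=\sum_{i=0}^{n-1}\tau^{i}$, and both $f_\ast,f^\ast$ are compatible with the diagonal maps. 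The inclusion $\supseteq$ is then immediate from naturality: $f_\ast(P_N)=f_\ast\big(\Delta_N(H_2(N,f^{-1}\mathcal L))\big)=\Delta_M\big(f_\ast H_2(N,f^{-1}\mathcal L)\big)\subseteq P_{M,\mathcal L}$, whence $f_\ast(P_N)\subseteq f_\ast(I_N)\cap P_M$.

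For the reverse inclusion I would pass to Tate cohomology of the cyclic group $G$ applied to the short exact sequence of $\mathbb Z[G]$-modules
$$0 \longrightarrow P_{N,f^{-1}(\mathcal L)} \longrightarrow I_{N,f^{-1}(\mathcal L)} \longrightarrow C_{N,f^{-1}(\mathcal L)} \longrightarrow 0.$$
Consider the map $\beta\colon \widehat H^{0}(G,P_N)\to\widehat H^{0}(G,I_N)$ induced by inclusion. Since $N_G(P_N)=f^\ast f_\ast(P_N)\subseteq f^\ast(P_M)\subseteq (P_N)^{G}$, applying the injective $f^\ast$ shows that $\ker\beta=0$ is equivalent to the desired equality $P_{M,\mathcal L}\cap f_\ast(I_N)=f_\ast(P_N)$. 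Exactness of the Tate long exact sequence gives $\ker\beta=\mathrm{im}\big(\widehat H^{-1}(G,C_N)\to\widehat H^{0}(G,P_N)\big)$, and the kernel of this connecting map is the image of $\widehat H^{-1}(G,I_N)$. By Theorem 3.1 (Hilbert Satz 90 for idèles, whose content is exactly $\ker f_\ast=(\tau-1)I_N$, i.e.\ $\widehat H^{-1}(G,I_N)=0$) the connecting map is injective, so $\ker\beta\cong\widehat H^{-1}(G,C_N)$. Thus the lemma is equivalent to the vanishing
$$\widehat H^{-1}\big(G,C_{N,f^{-1}(\mathcal L)}\big)=0,$$
the topological counterpart of the isomorphism $\widehat H^{-1}(\mathrm{Gal},C_E)\cong\widehat H^{-3}(\mathrm{Gal},\mathbb Z)$, which vanishes precisely because $G$ is cyclic.

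It remains to prove this vanishing, and I expect it to be the main obstacle. By periodicity for the finite cyclic group $G$ it coincides with the idèle-class Hilbert 90 $H^{1}(G,C_N)\cong\widehat H^{1}(G,C_N)=0$. The reciprocity isomorphism (\cref{GCFT}) already supplies $\widehat H^{0}(G,C_N)=C_{M,\mathcal L}/f_\ast(C_{N,f^{-1}\mathcal L})\cong G$, hence $|\widehat H^{0}(G,C_N)|=n$; the plan is to pair this with the Herbrand quotient identity $h(C_N)=|\widehat H^{0}(G,C_N)|/|\widehat H^{1}(G,C_N)|=n$, which forces $|\widehat H^{1}(G,C_N)|=1$. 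To establish $h(C_N)=n$ I would use multiplicativity $h(C_N)=h(I_N)/h(P_N)$ and compute the two idèle-side quotients locally: decomposing $I_{N,f^{-1}(\mathcal L)}$ over the components $K\subset\mathcal L$, the knots lying over a fixed $K$ form a single $G$-orbit, so their contribution is an induced module and Shapiro's lemma reduces the local Herbrand quotient to that of $H_1(\partial V_{J})$ as a module over the decomposition group $D_{J}$.

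The delicate point is exactly this local analysis. Using the Hilbert theory of \cref{HilTh} (inertia of order $e_i$ at the branch knots $K_1,\dots,K_r$ and trivial inertia elsewhere) and the $\mathbb Z$HS$^3$ splitting $I_{M,\mathcal L}=P_{M,\mathcal L}\oplus U_{M,\mathcal L}$ of \cref{IdeleHom}, I expect every unramified component to be cohomologically inert and contribute nothing, while each branch knot $K_i$ contributes a factor governed by $e_i$; the hard part is tracking how $f$ acts on the meridian–longitude basis of $H_1(\partial V_J)$ through the inertia group so that these local factors combine to give $h(I_N)/h(P_N)=n$. Should the idèle-class Hilbert 90 $H^1(G,C_N)=0$ be extractable directly from the construction of the reciprocity map in \cite{NiiboUeki}, this final step can instead be cited, short-cutting the Herbrand computation entirely.
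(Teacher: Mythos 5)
Two remarks before the substance: the paper itself contains no proof of this lemma --- it is imported wholesale from \cite{Tashiro2024} (Theorem 3.1 there) and used as a black box in Section 4 --- so the only question is whether your blind argument stands on its own. Its first half can be made to work: one can define a transfer $f^\ast$ componentwise, check $f^\ast f_\ast=N_G$ and injectivity of $f^\ast$, check compatibility with the diagonal maps, and then the Tate long exact sequence for $0\to P_{N}\to I_{N}\to C_{N}\to 0$ together with \cref{Hil90.ldt} (which is indeed $\widehat H^{-1}(G,I_{N,f^{-1}(\mathcal L)})=0$) reduces the lemma to the vanishing $\widehat H^{-1}(G,C_{N,f^{-1}(\mathcal L)})=0$. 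But already here one of your stated structural facts is false in the topological setting: the image of $f^\ast$ is \emph{not} $(I_{N,f^{-1}(\mathcal L)})^{G}$. Over a knot $K$ the deck group acts trivially on each $H_1(\partial V_J)$ (inertia rotates the meridian disk, the rest translates along $J$; both are isotopic to the identity), so the invariants over $K$ are a full diagonal copy of $\mathbb Z\lambda\oplus\mathbb Z\mu$, while $f^\ast$ only reaches $e_K\mathbb Z\lambda\oplus d_K\mathbb Z\mu$. Hence Tate's $\widehat H^{0}(G,-)=(-)^{G}/N_G(-)$ and the ``norm quotients'' such as $I_{M,\mathcal L}/f_\ast(I_{N,f^{-1}(\mathcal L)})$ that \cref{GCFT} controls are genuinely different groups here, whereas in number theory they coincide because $(I_E)^{G}=I_F$ and $(C_E)^{G}=C_F$.

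This disanalogy breaks both pillars of your endgame, and that is the genuine gap. First, the identification $\widehat H^{0}(G,C_N)\cong C_{M,\mathcal L}/f_\ast(C_{N,f^{-1}(\mathcal L)})\cong G$ is unjustified: it would require $(C_N)^{G}=f^\ast(C_{M,\mathcal L})$, the analogue of $(C_E)^{G}=C_F$, whose number-theoretic proof uses Hilbert 90 for $E^{\times}$ itself; no analogue for $P_{N}$ or $H_2(N,f^{-1}(\mathcal L))$ is available in this framework, and in view of the previous paragraph it is not even clearly true. Second, the Herbrand-quotient computation collapses structurally: $h(I_N)$ and $h(P_N)$ are not defined, because $\widehat H^{0}(G,I_N)$ is infinite. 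For each unbranched knot $K$ with $d_K>1$ (infinitely many, for a general very admissible link), Shapiro's lemma reduces its contribution to $\widehat H^{0}(D_J,H_1(\partial V_J))$ with \emph{trivial} $D_J$-action, i.e.\ $(\mathbb Z/d_K)^2\neq 0$; equivalently, the local units $\mathbb Z[\mu_J]$ have Herbrand quotient $d_K\neq 1$. The cohomological triviality of unramified local units is exactly the local input that makes the classical $S$-id\`ele computation of $h(C_E)=n$ work, and it is exactly what fails here --- so your hope that ``every unramified component is cohomologically inert and contributes nothing'' is the step that would fail, not a routine verification. As it stands the proposal proves only the (correct and useful) reduction to $\widehat H^{-1}(G,C_N)=0$; the vanishing itself, which is the entire content, is not established and cannot be obtained by transplanting the first-inequality argument. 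This is presumably why the actual proof in \cite{Tashiro2024} has to proceed differently (the present paper quotes from that source the explicit linking-number formula of \cref{diagonal} and the splitting $I_{M,\mathcal L}=P_{M,\mathcal L}\oplus U_{M,\mathcal L}$ of \cref{IdeleHom}, which permit a direct, non-cohomological attack for $\mathbb Z\mathrm{HS}^3$).
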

\begin{lem}[{\cite[Proposition 2.6]{Tashiro2024}}]\label{diagonal}
Let $M$ be an integral homology 3-sphere endowed with a very admissible link $\mathcal{L}$. Take $[A]\in H_2(M,\mathcal{L})$. Then, there is a finite sublink $L\subset\mathcal{L}$ such that $A\in H_2(M,L)$ and we can write $A=\sum_{K\subset L}c_K[S_K]$ with $c_K\in\mathbb{Z}$. Let $\Delta_{M,\mathcal{L}}=(a_K)_K\in I_{M,\mathcal{L}}$. Then, we have the following formula: 
$$
a_{K}=\begin{cases}
\displaystyle c_{K}[\lambda _{K}]- (\sum_{K'\subset L\setminus K} {\rm lk}(K,K')c_{K'})[\mu _{K}] & (K\subset L) \\
\displaystyle -\sum_{K'\subset L} {\rm lk}(K,K')c_{K'} [\mu _{K}] &(K\subset \mathcal{L}\setminus L).
\end{cases}
$$
\end{lem}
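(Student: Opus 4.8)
The plan is to make the diagonal map completely explicit by tracking the geometry of truncated Seifert surfaces through the sequence (1.6) and its compatibility (1.7). The computation splits into an algebraic part (identifying a convenient basis of $H_2(M,L)$) and a geometric part (computing the boundary of a single Seifert surface on each tubular torus), after which everything is assembled by linearity and stability under the colimit--limit construction of $\Delta_{M,\mathcal{L}}$.

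First I would fix, using the direct-limit definition of $H_2(M,\mathcal{L})$, a finite sublink $L\subset\mathcal{L}$ together with a representative $A\in H_2(M,L)$. Since $M$ is an integral homology 3-sphere, the long exact sequence of the pair $(M,L)$ and the vanishing $H_2(M)=H_1(M)=0$ give an isomorphism $H_2(M,L)\xrightarrow{\cong}H_1(L)\cong\bigoplus_{K\subset L}\mathbb{Z}[K]$. Each component $K$ bounds a Seifert surface $S_K$ in $M$, and $[S_K]\in H_2(M,L)$ is sent to the fundamental class $[K]$ under this isomorphism; hence $\{[S_K]\}_{K\subset L}$ is a $\mathbb{Z}$-basis and $A=\sum_{K\subset L}c_K[S_K]$ for unique $c_K\in\mathbb{Z}$. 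As $\Delta_{M,\mathcal{L}}$ is a homomorphism, it then suffices to compute the image of a single class $[S_K]$.

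The heart of the argument is to compute the composite $\partial\circ{\rm exc}$ of (1.6), applied to $[S_K]$ inside any finite link $L''\supset L$. After an isotopy making $S_K$ transverse to every torus $\partial V_{K'}$, the truncated surface $S_K\cap X_{L''}$ is properly embedded in $X_{L''}$ and represents ${\rm exc}([S_K])$, so its boundary class in $H_1(\partial V_{L''})=\bigoplus_{K'\subset L''}H_1(\partial V_{K'})$ is exactly what we want. On its own torus $\partial V_K$ the boundary curve is the preferred longitude $\lambda_K$, since $\partial S_K=K$ and ${\rm lk}(\lambda_K,K)=0$ forces the meridian coefficient to vanish. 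On each other torus $\partial V_{K'}$ with $K'\neq K$, the surface $S_K$ meets $V_{K'}$ in meridian disks whose signed count equals the intersection number $S_K\cdot K'={\rm lk}(K,K')$, so the contribution is $-{\rm lk}(K,K')\,\mu_{K'}$ under the stated orientation conventions. Thus $\partial({\rm exc}[S_K])=\lambda_K-\sum_{K'\neq K}{\rm lk}(K,K')\,\mu_{K'}$ in $H_1(\partial V_{L''})$.

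Summing these contributions with coefficients $c_K$ and collecting terms torus by torus yields, for $K\subset L$, the longitude coefficient $c_K$ and meridian coefficient $-\sum_{K'\subset L\setminus K}{\rm lk}(K,K')c_{K'}$, and for each $K\in L''\setminus L$ only the meridian term $-\sum_{K'\subset L}{\rm lk}(K,K')c_{K'}\,\mu_K$. These expressions are compatible with the projections $p_{L'',L}$ of diagram (1.7), hence stable as $L''$ grows, so they define the components $a_K$ of $\Delta_{M,\mathcal{L}}(A)$ in the projective limit for every $K\subset\mathcal{L}$; the restricted-product condition holds automatically, since only the finitely many $K\subset L$ carry a longitude term while every $K\in\mathcal{L}\setminus L$ gives $a_K\in\mathbb{Z}[\mu_K]$. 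I expect the only delicate point to be the signed count in the boundary computation: verifying that the algebraic intersection of the Seifert surface with a core curve $K'$ equals ${\rm lk}(K,K')$ and that the orientation conventions for $\lambda_{K'},\mu_{K'}$ produce the meridian coefficient with the correct sign. Everything else is bookkeeping in the colimit--limit construction of $\Delta_{M,\mathcal{L}}$.
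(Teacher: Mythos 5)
Your proof is correct, and there is in fact nothing in this paper to compare it against: the lemma is imported without proof from \cite[Proposition 2.6]{Tashiro2024}, and your argument --- using $H_2(M)=H_1(M)=0$ to identify $H_2(M,L)\cong H_1(L)=\bigoplus_{K\subset L}\mathbb{Z}[K]$ so that the Seifert-surface classes $[S_K]$ form a basis, then computing $\partial\circ\mathrm{exc}$ on the truncated Seifert surface torus by torus and passing to the limit via the compatibility (1.7) --- is exactly the standard computation that the cited proof carries out. The sign bookkeeping you flag as delicate does work out under the paper's conventions: with $\mathrm{lk}(\mu_{K'},K')=1$, each intersection point of $S_K$ with $K'$ of sign $\epsilon$ contributes a meridian disk whose oriented boundary is $\epsilon\,\mu_{K'}$, so writing $S_K$ as the truncated surface plus these disks plus the collar in $V_K$ and taking boundaries yields $\partial(\mathrm{exc}[S_K])=\lambda_K-\sum_{K'\neq K}\mathrm{lk}(K,K')\,\mu_{K'}$, with the collar leaving precisely the $0$-framed longitude.
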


\section{Hilbert's Satz 90 for id\`eles} \label{s3}
In this section, we prove topological Hilbert's Satz 90 for id\`ele groups which is the new result in two ways. Our first proof uses the Hilbert theory for branched covers and visualizes the situation. Our second proof uses Tate  cohomology and is immediate.
\begin{thm}\label{Hil90.ldt}
Let $M$ be an oriented connected closed 3-manifold endowed with a very admissible link $\mathcal{L}$. 
Let $f:N\to M$ be a cyclic branched cover branched over a finite sublink $L_0$ of $\mathcal{L}$ with the Galois group $G:={\rm Gal}(f)=\langle \tau \rangle$. Put $\widetilde{\mathcal{L}}:=f^{-1}(\mathcal{L})$ and $f$ induces the homomorphism $f_*:I_{N,\widetilde{\mathcal{L}}}\to I_{M,\mathcal{L}}$.
Then, $\widehat{H}^1(G,I_{N,\widetilde{\mathcal{L}}})=0$ holds. Namely, for any $a\in I_{N,\widetilde{\mathcal{L}}}$ with $f_*(a)=0$, there exists $b\in I_{N,\widetilde{\mathcal{L}}}$ such that $a=(\tau-1)b$.
\end{thm}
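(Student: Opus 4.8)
The plan is to read the statement as the vanishing of a Tate cohomology group and to compute it by localizing the idèle group along the fibers of $f$. One inclusion is formal: since $f\circ\tau=f$ we get $f_*\circ(\tau-1)=0$, so $(\tau-1)I_{N,\widetilde{\mathcal L}}\subseteq\ker f_*$. Because $f_*$ plays the role of the norm map, its kernel equals the kernel of the Tate norm $N_G=\sum_{g\in G}g$ acting on $I_{N,\widetilde{\mathcal L}}$, so the asserted identity $\ker f_*=(\tau-1)I_{N,\widetilde{\mathcal L}}$ is exactly $\widehat H^1(G,I_{N,\widetilde{\mathcal L}})=0$. Thus it suffices to prove the latter.

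For a knot $K\subset\mathcal L$ of $M$, write $f^{-1}(K)=J_1\sqcup\cdots\sqcup J_{c_K}$. By \cref{HilTh}, and since $G$ is abelian, $G$ permutes these components transitively with a well-defined stabilizer, the decomposition group $D_K$. Hence the local factor $\prod_{J\mid K}H_1(\partial V_J)$ is the induced module $\mathrm{Ind}_{D_K}^G H_1(\partial V_{J_1})$, and similarly $\prod_{J\mid K}\mathbb Z[\mu_J]=\mathrm{Ind}_{D_K}^G\mathbb Z[\mu_{J_1}]$. Writing $I_{N,\widetilde{\mathcal L}}$ as the filtered colimit, over finite sublinks $L\subset\mathcal L$, of the $G$-submodules $I_L$ consisting of idèles that are meridians outside $f^{-1}(L)$, each $I_L$ is a product of such induced modules. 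Since $\widehat H^1$ of a finite group commutes with products (a resolution of $\mathbb Z$ by finitely generated free $\mathbb{Z}G$-modules exists) and with filtered colimits, Shapiro's lemma reduces everything to the factors $\widehat H^1(D_K,H_1(\partial V_{J_1}))$ and $\widehat H^1(D_K,\mathbb Z[\mu_{J_1}])$.

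The crux is that these factors vanish, and for this I would show that $D_K$ acts trivially on $H_1(\partial V_J)\cong\mathbb Z[\mu_J]\oplus\mathbb Z[\lambda_J]$. The torus $\partial V_J$ is disjoint from the branch locus, so $D_K$ acts freely on it, and orientation-preservingly (each $g\in D_K$ preserves the orientation of $N$ and carries $V_J$ to itself); therefore the quotient $\partial V_J/D_K\cong\partial V_K$ is a torus and $D_K$ acts by deck transformations of a torus covering, i.e.\ by translations, which are isotopic to the identity and so act trivially on $H_1$. With trivial action $\widehat H^1(D_K,\mathbb Z)=\mathrm{Hom}(D_K,\mathbb Z)=0$, whence $\widehat H^1(D_K,H_1(\partial V_J))=0$ and $\widehat H^1(D_K,\mathbb Z[\mu_J])=0$. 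Feeding this back gives $\widehat H^1(G,I_L)=0$ for every $L$, hence $\widehat H^1(G,I_{N,\widetilde{\mathcal L}})=0$.

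For the more hands-on first proof I would instead, given $a=(a_J)$ with $f_*(a)=0$, solve $a=(\tau-1)b$ one $G$-orbit at a time: along the $c_K$-cycle of $\tau$ on $\{J_1,\dots,J_{c_K}\}$ the equation unwinds into a telescoping recurrence $b_{J_i}=\tau_*(b_{J_{i-1}})-a_{J_i}$, which closes up after one loop exactly when a certain transported sum vanishes. Since $f_*$ restricted to a boundary torus is the lattice inclusion $H_1(\partial V_J)\hookrightarrow H_1(\partial V_K)$ and is injective, the hypothesis $f_*(a)|_K=0$ is precisely this closing condition; setting $b_{J_1}=0$ then produces $b$, and because $\tau_*$ sends meridians to meridians the resulting $b$ remains inside $I_{N,\widetilde{\mathcal L}}$. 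I expect the main obstacle to be the local Galois analysis: verifying that the decomposition and inertia groups act trivially on $H_1(\partial V_J)$ through the free orientation-preserving torus action, and correctly matching the norm/closing condition with the hypothesis $f_*(a)=0$ via the lattice-inclusion description of $f_*$ on the boundary tori.
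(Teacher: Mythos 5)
Your proposal is correct, and although it runs along the same two tracks as the paper's treatment of \cref{Hil90.ldt} (an explicit telescoping construction and a Tate-cohomology argument), in each track your key step differs from the paper's — and is in fact the more robust one. In the cohomological track, the paper's alternative proof also reduces the claim to $\widehat H^1(G,I_{N,\widetilde{\mathcal L}})=0$, but then disposes of this by asserting that $I_{N,\widetilde{\mathcal L}}$ is a free module; that assertion fails whenever some decomposition group $D_K$ is nontrivial, since the local factor over such a $K$ is $\mathrm{Ind}_{D_K}^{G}H_1(\partial V_{J_1})$, which is induced but not free, and not even cohomologically trivial (by Shapiro, $\widehat H^0(G,\mathrm{Ind}_{D_K}^{G}\mathbb Z)\cong\mathbb Z/|D_K|\mathbb Z\neq 0$). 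Your route — write the restricted product as a filtered colimit of products of induced modules, commute $\widehat H^1$ past products and filtered colimits via a finitely generated free resolution, apply Shapiro, and use $\widehat H^1(D_K,\mathbb Z^2)=\mathrm{Hom}(D_K,\mathbb Z)^2=0$ — is exactly what is needed to make that argument valid in general; your justification that $D_K$ acts trivially on $H_1(\partial V_J)$ is correct, and can even be shortened: $(f|_{\partial V_J})_*$ is injective on $H_1$ and $(f|_{\partial V_J})_*\circ g_*=(f|_{\partial V_J})_*$ for $g\in D_K$. In the hands-on track, the paper's first proof telescopes along the fiber using $\sigma_K=\tau^{d_Ke_K}$ and assumes $\sigma_K(J_i)=J_{i-1}$; this transitivity fails whenever $\gcd(d_Ke_K,c_K)>1$ (for instance $n=4$, $c_K=d_Ke_K=2$ gives $\sigma_K=\tau^2\in D_K$, which fixes every component), whereas your recurrence $b_{J_i}=\tau_*(b_{J_{i-1}})-a_{J_i}$ uses $\tau$ itself, whose image generates $G/D_K$ and hence genuinely acts as a $c_K$-cycle, so your loop always closes. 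The one step you should still write out in full is the identification $\ker f_*=\ker N_G$, which you currently justify only by analogy before flagging it at the end: as you indicate, it follows from the injectivity of $(f|_{\partial V_{J_1}})_*$ together with the trivial $D_K$-action, since both kernels, restricted to the factor over a given $K$, then coincide with the condition that the sum of the $a_{J_i}$ transported to $J_1$ vanishes.
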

\begin{proof}
First, we present $f_*$ explicitly by using \cref{HilTh}. Let $a=(a_J)_J\in I_{N,\widetilde{\mathcal{L}}}$. For each component $J\subset \widetilde{\mathcal L}$, put $K=f(J)$ and $f^{-1}(K):=J_1\sqcup\cdots\sqcup J_{c_K}$ is a $c_K$-component link including $J$. Since $I_{N,\widetilde{\mathcal{L}}}\subset \prod_{J \subset \widetilde{\mathcal{L}}}H_1(\partial V_J)$, we can write $a_J=l_J[\lambda_J]+m_J[\mu_J]$. Simply, $l_i:=l_{J_i}$, $m_i:=m_{J_i}$. By \cref{HilTh}, the restriction map $f|_{\partial V_{J_i}}$ induces ${f_{\partial V_{J_i}}}_*:H_1(\partial V_{J_i})\to H_1(\partial V_K); l_i[\lambda_{J_i}]+m_i[\mu_{J_i}]\mapsto d_Kl_i[\lambda_K]+e_Km_i[\mu_K]$. Thus, we have $$f_*((a_{J})_J)=(d_K\sum_{J_i\subset f^{-1}(K)}l_i[\lambda_K]+e_K\sum_{J_i\subset f^{-1}(K)}m_i[\mu_K])_K.$$

Assume $f_*(a)=0$, i.e., $\sum_{J_i\subset f^{-1}(K)}l_i=\sum_{J_i\subset f^{-1}(K)}m_i=0$. Then, we construct $b=(b_J)_J\in I_{N,\widetilde{\mathcal{L}}}$ such that $a=(\tau-1)b$. Put $\sigma_K:=\tau^{d_Ke_K} \mbox{ and } \langle\sigma_K\rangle\cong d_Ke_K\mathbb{Z}/n\mathbb{Z}\cong\mathbb{Z}/c_K\mathbb{Z}$. We may assume $\sigma_K(J_i)=J_{i-1}$ for every $i\in\mathbb{Z}/c_K\mathbb{Z}$. Define $(b'_J)_J\in I_{N,\widetilde{\mathcal{L}}}$ such that $\bigoplus_{J_i\subset f^{-1}(K)}H_1(\partial V_{J_i})$-component $(b'_{J_i})_i$ over $K$ is denoted by $(x_i[\lambda_{J_i}]+y_i[\mu_{J_i}])_i$ and $x_1=0,x_2=l_1,\ldots, x_{c_K}=l_1+\cdots+l_{c_K-1}, y_1=0,y_2=m_1,\ldots, y_{c_K}=m_1+\cdots+m_{c_K-1}$. Then, we have $(\sigma_K-1)(b'_{J_i})_i=((x_{i+1}-x_i)[\lambda_i]+(y_{i+1}-y_i)[\mu_i])_i=(l_i[\lambda_i]+m_i[\mu_i])_i=(a_{J_i})_i.$ Note $(\sigma_K-1)=(\tau-1)(\tau^{d_Ke_K-1}+\tau^{d_Ke_K-2}+\cdots+\tau+1)$ and when $(l_i)_i\neq0$, then $(x_i)_i\neq0$. If we define $b:=(b_J)_J\in I_{N,\widetilde{\mathcal{L}}}$ such that $\oplus_{J_i}H_1(\partial V_{J_i})$-component $(b_{J_i})_i$ is $(\tau^{d_Ke_K-1}+\cdots+1)(b'_{J_i})_i$, then $a=(\tau-1)b$. 
\end{proof}
\begin{proof}[Alternative proof]
When we put $\mathcal{N}:I_{N,\widetilde{\mathcal{L}}}\to I_{N,\widetilde{\mathcal{L}}}; a\mapsto \sum_{i=1}^n\tau^ia$, $f_*(a)=0$ if and only if $\mathcal{N}(a)=0$. Take any $a\in I_{N,\widetilde{\mathcal{L}}}$ with $f_*(a)=0$, i.e., $\sum_{i=1}^n\tau^ia=0$. For any free $\mathbb{Z}G$-module $\mathcal{M}$, the Tate cohomology $\widehat{H}^i(G,\mathcal{M})=0$. Since $I_{N,\widetilde{\mathcal{L}}}$ is a free $\mathbb{Z}/n\mathbb{Z}$-module, $\widehat{H}^1(G, I_{N,\widetilde{\mathcal{L}}})=0$. Thus, there exists $b\in I_{N,\widetilde{\mathcal{L}}}$ such that $a=(\tau-1)b$. \end{proof}

\section{Genus theory for $\mathbb{Z}$HS$^{3}$} \label{s4}
In this section, we present the proof of the topological genus theory that is perfectly parallel to the case of number theory. Hereafter, we assume $M$ is a $\mathbb{Z}{\rm HS}^3$. Let $L_0=K_1\sqcup \cdots \sqcup K_r$ be a $r(\in\mathbb{Z}_{>0})$-components link of $M$. Let $f:N\to M$ be a cyclic covering of degree $n$ branched over $L_0$ with the Galois group Gal$(f)=\langle\tau\rangle$ and let $e_i$ denote the branch index of $K_i$. A 1-cycle representing a homology class of $H_1(N)$ will be taken to be disjoint from $f^{-1}(L_0)$. We say that $[z],[w]\in H_1(N)$ belong to the same genus and write as $[z]\approx[w]$, if the following holds:
\[
{\rm lk}(f_*(z),K_i)\equiv{\rm lk}(f_*(w),K_i) \ {\rm mod}\ e_i \mbox{ for every }i.
\]
This notion does not depend on the choice of representatives $z,w$ of classes. We define the genus number of $f:N\to M$ by $g_f=\#(H_1(N)/(\tau-1)H_1(N))$.
\begin{thm}\label{thm.genus.ldt}
Assume the above situation. Consider the group homomorphism $\chi:H_1(N)\to \prod_{i=1}^r\mathbb{Z}/e_i\mathbb{Z}; [z]\mapsto ({\rm lk}(f_*(z),K_i)\ {\rm mod}\ e_i)$. Then, 
\[
{\rm Im}\chi=\{(x_i)\in\prod_{i=1}^r\mathbb{Z}/e_i\mathbb{Z}\mid\sum_{i=1}^r \frac{n}{e_i}x_i\equiv 0\mbox{ mod }n\},\ {\rm Ker}\chi=(\tau-1)H_1(N).
\]
 and
 \[H_1(N)/\approx\ \  \simeq H_1(N)/(\tau-1)H_1(N) \simeq {\rm Im}\chi.
\]
In particular, $$g_f=\frac{\prod_i e_i}{n}.$$
\end{thm}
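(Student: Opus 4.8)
The plan is to prove the statement entirely inside the idèle class group of $(N,\widetilde{\mathcal L})$, deploying \cref{Hil90.ldt} and \cref{HNP} in exactly the roles that Hilbert's Satz 90 and the Hasse norm theorem play over number fields. Since $\approx$ is by definition the kernel of $\chi$, the isomorphism $H_1(N)/\approx\ \simeq{\rm Im}\chi$ is automatic, and everything reduces to the two assertions ${\rm Ker}\chi=(\tau-1)H_1(N)$ and ${\rm Im}\chi={\rm Ker}\psi$ together with a cardinality count, where $\psi\colon\prod_i\mathbb{Z}/e_i\mathbb{Z}\to\mathbb{Z}/n\mathbb{Z}$, $(x_i)\mapsto\sum_i\frac{n}{e_i}x_i$. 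Throughout I would represent a class $[z]\in H_1(N)$ by an idèle $\alpha\in I_{N,\widetilde{\mathcal L}}$ via the isomorphism $H_1(N)\cong I_{N,\widetilde{\mathcal L}}/(P_{N,\widetilde{\mathcal L}}+U_{N,\widetilde{\mathcal L}})$ of \cref{IdeleHom}. Writing $C:=f_*(U_{N,\widetilde{\mathcal L}})$, the explicit form of $f_*$ computed in the proof of \cref{Hil90.ldt} shows $C=\prod_K e_K\mathbb{Z}[\mu_K]$, so that, using $I_{M,\mathcal L}=P_{M,\mathcal L}\oplus U_{M,\mathcal L}$ (\cref{IdeleHom}), one obtains a canonical identification $I_{M,\mathcal L}/(P_{M,\mathcal L}+C)\cong\prod_{i=1}^r\mathbb{Z}/e_i\mathbb{Z}$ under which $f_*(\alpha)\mapsto\chi([z])$; here \cref{diagonal} is what guarantees that subtracting the Seifert-surface (principal) part from $f_*(\alpha)$ leaves $\mu_{K_i}$-coefficients equal to ${\rm lk}(f_*(z),K_i)$. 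This identification is the technical heart of the whole argument.

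For the kernel, the inclusion $(\tau-1)H_1(N)\subseteq{\rm Ker}\chi$ is immediate, since $f_*\circ(\tau-1)=0$ on chains because $f\circ\tau=f$, so $f_*((\tau-1)z)=0$ and all its linking numbers vanish. The reverse inclusion is where the two main inputs enter in a perfectly parallel way. Given $[z]\in{\rm Ker}\chi$ with representing idèle $\alpha$, the identification above gives $f_*(\alpha)\in P_{M,\mathcal L}+C$, say $f_*(\alpha)=p+f_*(u)$ with $p\in P_{M,\mathcal L}$ and $u\in U_{N,\widetilde{\mathcal L}}$. Then $p=f_*(\alpha-u)\in f_*(I_{N,\widetilde{\mathcal L}})\cap P_{M,\mathcal L}=f_*(P_{N,\widetilde{\mathcal L}})$ by the Hasse norm principle \cref{HNP}, so $p=f_*(\pi)$ for some $\pi\in P_{N,\widetilde{\mathcal L}}$. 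Hence $f_*(\alpha-u-\pi)=0$, and \cref{Hil90.ldt} produces $\beta$ with $\alpha-u-\pi=(\tau-1)\beta$. Passing to $H_1(N)=I_{N,\widetilde{\mathcal L}}/(P_{N,\widetilde{\mathcal L}}+U_{N,\widetilde{\mathcal L}})$ kills $u$ and $\pi$, leaving $[z]=(\tau-1)[\beta]\in(\tau-1)H_1(N)$.

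For the image, the containment ${\rm Im}\chi\subseteq{\rm Ker}\psi$ is a reciprocity statement: $f_*(z)$ is the image of a closed $1$-cycle, hence has trivial monodromy in ${\rm Gal}(f)=\mathbb{Z}/n\mathbb{Z}$, and since the meridian $\mu_{K_i}$ carries inertia of order $e_i$ (normalized so that its monodromy is $\tfrac{n}{e_i}$) this reads as $\sum_i\frac{n}{e_i}{\rm lk}(f_*(z),K_i)\equiv0\bmod n$. For the reverse containment, given $(x_i)\in{\rm Ker}\psi$ I would take any integer lift $(y_i)$ with $y_i\equiv x_i\bmod e_i$; then $\sum_i\frac{n}{e_i}y_i\equiv0\bmod n$ still holds, so a knot $K\subset M\setminus L_0$ with ${\rm lk}(K,K_i)=y_i$ (which exists since $H_1(M\setminus L_0)=\mathbb{Z}^r$ is generated by the meridians) splits completely in $f$, whence any lift $J$ of $K$ satisfies $f_*[J]=[K]$ and $\chi([J])=(x_i)$. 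Finally, connectedness of $N$ forces the monodromy $H_1(M\setminus L_0)\to\mathbb{Z}/n\mathbb{Z}$ to be surjective, i.e. $\gcd(\tfrac{n}{e_1},\dots,\tfrac{n}{e_r})=1$, so $\psi$ is onto and $\#{\rm Ker}\psi=\prod_ie_i/n$. Assembling the pieces yields $H_1(N)/\approx\ \simeq H_1(N)/(\tau-1)H_1(N)\simeq{\rm Im}\chi={\rm Ker}\psi$ and $g_f=\prod_ie_i/n$.

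I expect the main obstacle to be the identification set up in the first paragraph: verifying that reducing $f_*(\alpha)$ modulo $P_{M,\mathcal L}+C$ recovers precisely the geometric linking numbers mod $e_i$, with the residue-degree factors $d_K$ and the vanishing of the preferred self-linking terms matching on both sides. This single step is what simultaneously converts the kernel condition into the idèle-membership condition that lets \cref{HNP} and \cref{Hil90.ldt} fire, and pins down the target group. A secondary subtlety concerns the normalization of the inertia generators, i.e. the implicit assumption $\phi(\mu_{K_i})=\tfrac{n}{e_i}$; if instead $\phi(\mu_{K_i})=\tfrac{n}{e_i}u_i$ with $\gcd(u_i,e_i)=1$, the image is cut out by $\sum_i\frac{n}{e_i}u_ix_i\equiv0$, a subgroup of the same cardinality $\prod_ie_i/n$, so the genus number is unaffected even though the explicit description of ${\rm Im}\chi$ must then be read with this weighting.
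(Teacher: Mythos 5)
Your proposal is correct, and its core mechanism coincides with the paper's own proof: you represent $H_1(N)$ as $I_{N,\widetilde{\mathcal L}}/(P_{N,\widetilde{\mathcal L}}+U_{N,\widetilde{\mathcal L}})$ via \cref{IdeleHom}; you prove ${\rm Ker}\,\chi=(\tau-1)H_1(N)$ by the same two-step use of \cref{HNP} (to convert the principal part of $f_*(\alpha)$ into $f_*(\pi)$ with $\pi$ a principal id\`ele) followed by \cref{Hil90.ldt}; and your ``technical heart'' --- the identification $I_{M,\mathcal L}/(P_{M,\mathcal L}+f_*(U_{N,\widetilde{\mathcal L}}))\cong\prod_{i=1}^r\mathbb{Z}/e_i\mathbb{Z}$ under which $f_*(\alpha)\mapsto\chi([z])$, justified by \cref{diagonal} --- is exactly the content of the paper's Steps 2--3 (the map $\psi$ there, including the computation $f_*(U_{N,\widetilde{\mathcal L}})=\prod_K e_K\mathbb{Z}[\mu_K]$, plus the commuting triangle with $\rho$ and $\bar\chi$). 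The genuine divergence is the computation of ${\rm Im}\,\chi$: the paper deduces it from the global reciprocity law \cref{GCFT}, which gives the exact sequence $0\to(f_*(I_{N,\widetilde{\mathcal L}})+P_{M,\mathcal L})/P_{M,\mathcal L}\to I_{M,\mathcal L}/P_{M,\mathcal L}\to{\rm Gal}(f)\to0$, followed by a diagram chase against $\Sigma$ (what you call $\psi$), whereas you argue by covering-space theory: pushforwards of cycles in $N$ have trivial monodromy, conversely classes of $H_1(M\setminus L_0)$ with trivial monodromy lift, and connectedness forces surjectivity of the monodromy, giving $\#{\rm Ker}\,\Sigma=\prod_ie_i/n$. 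Your route is more elementary and self-contained at this step; the paper's route keeps the entire argument inside the id\`elic formalism --- which is precisely the parallelism with Iyanaga--Tamagawa it aims to exhibit --- and outsources the lifting criterion to \cref{GCFT} as a black box.

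Your closing caveat about normalization is not pedantry; it is the one place where you are more careful than the paper. The paper's Step 2 asserts, without verification, that the square formed by $\psi$, $\Sigma$, and the isomorphism ${\rm Gal}(f)\cong\mathbb{Z}/n\mathbb{Z}$ commutes, and that commutativity is exactly the hypothesis you isolate: each meridian $\mu_{K_i}$ must have monodromy $\tau^{n/e_i}$ rather than $\tau^{(n/e_i)u_i}$ with $\gcd(u_i,e_i)=1$. For $r\geq 2$ the units $u_i$ cannot in general be simultaneously normalized to $1$ by re-choosing the generator $\tau$ (e.g.\ a degree-$4$ cover of $S^3$ branched over a two-component link with monodromy sending $(\mu_{K_1},\mu_{K_2})$ to $(1,3)$ in $\mathbb{Z}/4\mathbb{Z}$), so the literal unweighted description of ${\rm Im}\,\chi$ holds only under that hypothesis, or with $\Sigma$ replaced by its $u_i$-weighted version; as you observe, $\#{\rm Ker}\,\Sigma$ and hence the genus number $g_f=\prod_ie_i/n$ are unaffected either way.
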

\begin{lem}\label{well-def}
The following homomorphism $\chi:H_1(N)\to \prod_{i=1}^r\mathbb{Z}/e_i\mathbb{Z}; [z]\mapsto ({\rm lk}(f_*(z),K_i)\ {\rm mod}\ e_i)$ is well-defined.
\end{lem}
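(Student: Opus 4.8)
The plan is to show that $\chi$ does not depend on the choice of $1$-cycle representing a class in $H_1(N)$, which is the only substantive point; additivity of $\chi$ then follows at once from the bilinearity of the linking number in the ambient $\mathbb{Z}$HS$^3$ and the additivity of $f_*$. So suppose $z$ and $z'$ are $1$-cycles in $N$, both disjoint from $\widetilde{L}_0:=f^{-1}(L_0)$, with $[z]=[z']$ in $H_1(N)$. Then $z-z'=\partial\Sigma$ for some singular $2$-chain $\Sigma$ in $N$, which after a small perturbation rel boundary I may take to be transverse to the $1$-dimensional set $f^{-1}(K_i)$ for each $i$. Pushing forward gives $f_*z-f_*z'=\partial(f_*\Sigma)$ in $M$, and since $z,z'$ avoid $\widetilde{L}_0$ their images avoid $L_0$, so each ${\rm lk}(f_*z,K_i)$ and ${\rm lk}(f_*z',K_i)$ is defined.

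First I would reduce the difference of linking numbers to an intersection number. Because $M$ is a $\mathbb{Z}$HS$^3$, $H_1(M)=H_2(M)=0$; choosing $2$-chains $W,W'$ in $M$ with $\partial W=f_*z$ and $\partial W'=f_*z'$ gives ${\rm lk}(f_*z,K_i)=W\cdot K_i$ and ${\rm lk}(f_*z',K_i)=W'\cdot K_i$. Since $W-W'-f_*\Sigma$ is a $2$-cycle, it bounds (as $H_2(M)=0$) and so meets the $1$-cycle $K_i$ with zero algebraic intersection, whence
\[
{\rm lk}(f_*z,K_i)-{\rm lk}(f_*z',K_i)=(f_*\Sigma)\cdot K_i,
\]
the signed count of points where $f_*\Sigma$ meets $K_i$.

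The crux is then to show $(f_*\Sigma)\cdot K_i\equiv 0\pmod{e_i}$, and this is where the branch index enters. Since $f_*\Sigma$ meets $K_i$ exactly over the points where $\Sigma$ meets $f^{-1}(K_i)$, I would work in a tubular neighborhood $V_{J_l}\cong S^1\times D^2$ of each component $J_l$ of $f^{-1}(K_i)$, where $f$ is modeled by $(\theta,w)\mapsto(d_i\theta,w^{e_i})$; the meridian-disk direction is the $e_i$-fold branching dictated by \cref{HilTh}, equivalently $f_*\mu_{J_l}=e_i\mu_{K_i}$, as in the computation inside the proof of \cref{Hil90.ldt}. At a transverse intersection point of $\Sigma$ with $J_l$ the chain looks locally like a meridian disk, whose image wraps $e_i$ times around a meridian disk of $K_i$ and hence meets $K_i$ with local multiplicity $e_i$. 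Summing over all intersection points yields $(f_*\Sigma)\cdot K_i=e_i\,(\Sigma\cdot f^{-1}(K_i))\equiv 0\pmod{e_i}$, so ${\rm lk}(f_*z,K_i)\equiv{\rm lk}(f_*z',K_i)\pmod{e_i}$ for every $i$ and $\chi$ is well defined.

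I expect the local multiplicity computation at the branch locus to be the main obstacle: one must check both that $f_*\Sigma$ meets $K_i$ only over $f^{-1}(K_i)$ and that each transverse intersection contributes exactly $e_i$ with the correct sign (using that $f$ is orientation-preserving), rather than the longitudinal covering degree $d_i$. The remaining ingredients---extracting transverse representatives, the reduction to intersection numbers, and additivity of $\chi$---are routine once $M$ is assumed to be a $\mathbb{Z}$HS$^3$.
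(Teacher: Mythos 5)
Your proof is correct in outline, but it takes a genuinely more geometric route than the paper. The paper argues purely homologically: writing $Y=N\setminus f^{-1}(L_0)$, it uses the surjectivity of $H_1(Y)\to H_1(N)$ and the exact sequence of the pair $(N,Y)$ together with the excision isomorphism $H_2(N,Y)\cong H_2(V_{\widetilde{L_0}},\partial V_{\widetilde{L_0}})$ to see that the difference class, lifted to $H_1(Y)$, is an integral combination $\sum_j c_{J_j}[\mu_{J_j}]$ of meridians of the components of $f^{-1}(L_0)$; it then pushes these forward via the Hilbert-theory formula $f_*[\mu_{J_j}]=e_K[\mu_K]$ and evaluates linking numbers of meridians, which are $0$ for $i\neq i'$ and multiples of $e_i$ otherwise. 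You instead take a bounding $2$-chain $\Sigma$ directly, make it transverse to the branch locus, reduce the difference of linking numbers to the intersection number $(f_*\Sigma)\cdot K_i$ (using $H_1(M)=H_2(M)=0$), and extract the factor $e_i$ from the local branched-covering model $(\theta,w)\mapsto(d_i\theta,w^{e_i})$. The two arguments rest on the same underlying fact --- meridian disks at the branch locus push forward with multiplicity $e_i$ --- and your intersection coefficients $\Sigma\cdot J_l$ are precisely the paper's coefficients $c_{J_j}$. What the paper's version buys is that it never leaves the level of homology groups: no chain-level transversality or perturbation is needed (to be fully rigorous your argument should work with smooth or simplicial chains, since singular chains cannot literally be put in general position), and the local-model computation is replaced by the same formula $f_*\mu_J=e_K\mu_K$ already invoked in the proof of \cref{Hil90.ldt}. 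What your version buys is geometric transparency: it makes visible exactly why the branch index, rather than the longitudinal degree $d_i$, appears, and it yields the explicit identity $(f_*\Sigma)\cdot K_i=e_i\,(\Sigma\cdot f^{-1}(K_i))$.
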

\begin{proof}[Proof of \cref{well-def}]
Put $\widetilde{L_0}:=f^{-1}(L_0)$ and $Y:=N\setminus\widetilde{L_0}$. Take any $[z],[w]\in H_1(N)$ with $[z]=[w]$. We show that lk$(f_*(z),K_i)\equiv$ lk$(f_*(w),K_i)$ mod $e_i$ for every $i$ i.e. $\chi([z-w])=0$. Since the inclusion $ \iota_{Y,N}:Y\hookrightarrow N$ induces the surjective homomorphism $\iota_{Y,N*}:H_1(Y)\to H_1(N)$, there exists $[v]\in H_1(Y)$ such that $\iota_{Y,N*}([v])=[z-w]$. For the pair $(N,Y)$, we have the following relative homology exact sequence
\begin{gather*}
\cdots\to H_2(N,Y)\overset{\partial}{\to}H_1(Y)\to H_1(N)\to\cdots.
\end{gather*}
By $[z-w]=0$, $[v]\in{\rm Ker }\iota_{Y,N*}={\rm Im}\partial$. 

Here, we have the excision isomorphism $H_2(N,Y)\cong H_2(V_{\widetilde{L_0}},\partial V_{\widetilde{L_0}})$ and the following relative homology exact sequence
\begin{gather*}
0\to H_2(V_{\widetilde{L_0}},\partial V_{\widetilde{L_0}}) \overset{\partial}{\to}H_1(\partial V_{\widetilde{L_0}})\to H_1(V_{\widetilde{L_0}}) \to 0.
\end{gather*}
Thus, $H_2(N,Y)\cong H_2(V_{\widetilde{L_0}},\partial V_{\widetilde{L_0}})\cong \langle[S_{J_j}]\mid S_{J_j}$ is a surface such that $\partial S_{J_j}=\mu_{J_j}$ with $J_j\subset\widetilde{L_0}\rangle$. 

Since $[v]\in{\rm Im}\partial$, there exist some $\sum_jc_{J_j}[S_{J_j}]\in H_2(V_{\widetilde{L_0}},\partial V_{\widetilde{L_0}})$ with $c_{J_j}\in\mathbb{Z}$ such that $[v]=\partial(\sum_jc_{J_j}[S_{J_j}])=\sum_jc_{J_j}[\mu_{J_j}]\in H_1(Y)$. Thus, $f_*([v])=\sum_{K\subset L_0}e_K(\sum_{J_j\subset f^{-1}(K)}c_{J_j})[\mu_{K}]$, where $e_K$ is a branch index of $K$. For any $i,i'\in\{1,\cdots,r\}$, if $i\neq i'$, then lk$([\mu_{K_{i'}}],K_i)=0$ and if $i=i'$, then lk$(e_i(\sum c_{J_j})[\mu_{K_i}],K_i)\equiv0$ mod $e_i$. Therefore, $$\chi([z-w])=({\rm lk}(\sum_{K\subset L_0}e_K(\sum_{J_j\subset f^{-1}(K)}c_{J_j})[\mu_K],K_i)\mbox{ mod }e_i)=0.\qedhere$$ \end{proof}

\begin{proof}[Proof of \cref{thm.genus.ldt}] 
  \textbf{Step 0.} By \cref{exVAL}, we can take a very admissible link $\mathcal{L}$ containing $L_0$ of $M$. By definition, the preimage  $\widetilde{\mathcal{L}}:=f^{-1}(\mathcal{L})$ is again a very admissible link of $N$. For the pairs $(N,\widetilde{\mathcal{L}}),(M,\mathcal{L})$, we have the id\`ele groups $I_{N,\widetilde{\mathcal{L}}},I_{M,\mathcal{L}}$, the diagonal maps $\Delta_{N,\widetilde{\mathcal{L}}},\Delta _{M,\mathcal{L}}$, the principal id\`ele groups $P_{N,\widetilde{\mathcal{L}}},P_{M,\mathcal{L}}$, and the unit id\`ele groups $U_{N,\widetilde{\mathcal{L}}},U_{M,\mathcal{L}}$, respectively.  

\textbf{Step 1.} Note that the natural homomorphism $f_*:I_{N,\widetilde{\mathcal{L}}}\to I_{M,\mathcal{L}}$ induces a natural surjective homomorphism 
$$\begin{array}{cccc}
\bar f:&I_{N,\widetilde{\mathcal{L}}}/P_{N,\widetilde{\mathcal{L}}}& \to &(f_*(I_{N,\widetilde{\mathcal{L}}})+P_{M,\mathcal{L}})/P_{M,\mathcal{L}}\\ &a+P_{N,\widetilde{\mathcal{L}}}&\mapsto &f_*(a)+P_{M,\mathcal{L}}.
\end{array}$$
We will show that Ker$\bar f=(\tau-1)(I_{N,\widetilde{\mathcal{L}}}/P_{N,\widetilde{\mathcal{L}}})$.  Take any $a+P_{N,\widetilde{\mathcal{L}}}\in$ Ker$\bar f$. By $\bar f(a+P_{N,\widetilde{\mathcal{L}}})=f_*(a)+P_{M,\mathcal{L}}=P_{M,\mathcal{L}}$, we have $f_*(a)\in P_{M,\mathcal{L}}$. By \cref{HNP}, there exists $A\in H_2(N,\widetilde{\mathcal{L}})$ such that $f_*(a)=f_*(\Delta_{N,\widetilde{\mathcal{L}}}(A))$, i.e., $f_*(a-\Delta_{N,\widetilde{\mathcal{L}}}(A))=0$. Moreover, by \cref{Hil90.ldt}, there exists $b\in I_{N,\widetilde{\mathcal{L}}}$ such that $a-\Delta_{N,\widetilde{\mathcal{L}}}(A)=(\tau-1)b$. Since $\Delta_{N,\widetilde{\mathcal{L}}}(A)\in P_{N,\widetilde{\mathcal{L}}}$, we have $$a+P_{N,\widetilde{\mathcal{L}}}=(\tau-1)b+P_{N,\widetilde{\mathcal{L}}}\in(\tau-1)(I_{N,\widetilde{\mathcal{L}}}/P_{N,\widetilde{\mathcal{L}}}).$$ Also, take any $(\tau-1)a+P_{N,\widetilde{\mathcal{L}}}\in (\tau-1)(I_{N,\widetilde{\mathcal{L}}}/P_{N,\widetilde{\mathcal{L}}})$. Since $\tau$ satisfies $f\circ\tau=f$, we have $\bar f((\tau-1)a+P_{N,\widetilde{\mathcal{L}}})=f_*((\tau-1)a)+P_{M,\mathcal{L}}=P_{M,\mathcal{L}}$. Thus, we have $(\tau-1)a+P_{N,\widetilde{\mathcal{L}}}\in$ Ker$\bar f$, and hence  Ker$\bar f=(\tau-1)(I_{N,\widetilde{\mathcal{L}}}/P_{N,\widetilde{\mathcal{L}}})$ holds as desired. 

By \cref{IdeleHom}, we have an isomorphism $\rho:I_{N,\widetilde{\mathcal{L}}}/(P_{N,\widetilde{\mathcal{L}}}+U_{N,\widetilde{\mathcal{L}}})\overset{\cong}{\to} H_1(N)$. Thus, the following commutative exact diagram commutes.
\[
\begin{array}{ccccccccc}
&&&&0&&0&&\\
&&&&\downarrow&&\downarrow&&\\
&&&&(U_{N,\widetilde{\mathcal{L}}}+P_{N,\widetilde{\mathcal{L}}})/P_{N,\widetilde{\mathcal{L}}}&\overset{\bar f}{\to}&(f_*(U_{N,\widetilde{\mathcal{L}}})+P_{M,\mathcal{L}})/P_{M,\mathcal{L}}&\to&0\\
&&&&\downarrow&&\downarrow&&\\
0&\to&(\tau-1)(I_{N,\widetilde{\mathcal{L}}}/P_{N,\widetilde{\mathcal{L}}})&\to&I_{N,\widetilde{\mathcal{L}}}/P_{N,\widetilde{\mathcal{L}}}&\overset{\bar f}{\to}&(f_*(I_{N,\widetilde{\mathcal{L}}})+P_{M,\mathcal{L}})/P_{M,\mathcal{L}}&\to&0\\
&&\downarrow&&\rho\downarrow&&&&\\
0&\to&(\tau-1)H_1(N)&\to&H_1(N)&&&&\\
&&\downarrow&&\downarrow&&&&\\
&&0&&0&&&&
\end{array}
\]
By this diagram, we obtain the following exact sequence $$0\to (\tau-1)H_1(N)\to H_1(N)\overset{f_*}{\to}  (f_*(I_{N,\widetilde{\mathcal{L}}})+P_{M,\mathcal{L}})/(f_*(U_{N,\widetilde{\mathcal{L}}})+P_{M,\mathcal{L}})\to 0.$$

\textbf{Step 2.} Next, we will show that $$(f_*(I_{N,\widetilde{\mathcal{L}}})+P_{M,\mathcal{L}})/(f_*(U_{N,\widetilde{\mathcal{L}}})+P_{M,\mathcal{L}})\cong \{(x_i)\in\prod_{i=1}^r\mathbb{Z}/e_i\mathbb{Z}\mid\sum_{i=1}^r \frac{n}{e_i}x_i\equiv 0\mbox{ mod }n\}.$$

By \cref{IdeleHom}, we have $I_{M,\mathcal{L}}=U_{M,\mathcal{L}}\oplus P_{M,\mathcal{L}}$. Hence, for any class of $I_{M,\mathcal{L}}/P_{M,\mathcal{L}}\cong U_{M,\mathcal{L}}$, there exists a unique representative $\alpha=(\alpha_K)_K$ such that $\alpha_K=r_K[\mu_K]$ with $r_K\in\mathbb{Z}$. We define a surjective group morphism by 
$$\begin{array}{cccc}
\psi:&I_{M,\mathcal{L}}/P_{M,\mathcal{L}}&\to&\prod_i\mathbb{Z}/e_i\mathbb{Z}\\ &(\alpha_K)_K+P_{M,\mathcal{L}}&\mapsto&(r_{K_i}{\rm lk}([\mu_{K_i}],K_{i}){\mbox{ mod }e_i})_i=(r_{K_i}\mbox{ mod }e_i)_i.\end{array}$$
We will show that Ker$\psi=(f_*(U_{N,\widetilde{\mathcal{L}}})+P_{M,\mathcal{L}})/P_{M,\mathcal{L}}$. To prove Ker$\psi\subset(f_*(U_{N,\widetilde{\mathcal{L}}})+P_{M,\mathcal{L}})/P_{M,\mathcal{L}}$,  take any $(r_K[\mu_K])_K+P_{M,\mathcal{L}}\in$ Ker$\psi$. Since $\psi((r_K[\mu_K])_K+P_{M,\mathcal{L}})=(r_{K_i} \mbox{ mod } e_i)_i=0$, there exists $s_{K_i}\in\mathbb{Z}$ such that $r_{K_i}=e_{K_i}s_{K_i}$ for each $K_i\subset L_0$. In addition, if $K\subset \mathcal{L}\setminus L_0$, then we have $e_K=1$, so we put $r_K=s_K$. For each $K\subset\mathcal{L}$, write $f^{-1}(K)=\sqcup_j J_j=J_1\sqcup\cdots\sqcup J_{c_K}$, and put $u_{J_j}:=\begin{cases}s_K[\mu_{J_1}]\mbox{ (if }j=1)\\ 0\mbox{ (otherwise)}\end{cases}$. Then, we have $\sum_{J_j\subset f^{-1}(K)}{f_{\partial V_{J_j}}}_*(u_{J_j})=r_{K}[\mu_{K}]$ for each $K$. Thus, we obtain $u=(u_J)_J\in U_{N,\widetilde{\mathcal{L}}}$ satisfying $(r_K[\mu_K])_K=f_*(u)$. This implies that $(r_K[\mu_K])_K+P_{M,\mathcal{L}}\in (f_*(U_{N,\widetilde{\mathcal{L}}})+P_{M,\mathcal{L}})/P_{M,\mathcal{L}}$. 

Also, to prove Ker$\psi\supset(f_*(U_{N,\widetilde{\mathcal{L}}})+P_{M,\mathcal{L}})/P_{M,\mathcal{L}}$, we take any $\alpha+P_{M,\mathcal{L}}\in (f_*(U_{N,\widetilde{\mathcal{L}}})+P_{M,\mathcal{L}})/P_{M,\mathcal{L}}$. There exists $(u_J)_J\in U_{N,\widetilde{\mathcal{L}}}$ such that $\alpha+P_{M,\mathcal{L}}=f_*((u_J)_J)+P_{M,\mathcal{L}}$. It suffices to look at $\alpha_K$ only for the case with $K=K_i\subset L_0$. If $J\subset f^{-1}(K_i)$, then ${f_{\partial V_{J}}}_*(u_J)=e_{K_i}m_J[\mu_{K_i}]$. Thus, $\psi(f_*((u_J)_J)+P_{M,\mathcal{L}})=0$. Hence, Ker$\psi=(f_*(U_{N,\widetilde{\mathcal{L}}})+P_{M,\mathcal{L}})/P_{M,\mathcal{L}}$ is proved. 

Furthermore, by \cref{GCFT}, there exist isomorphisms $C_{M,\mathcal{L}}/f_*(C_{N,\widetilde{\mathcal{L}}})\overset{\cong}{\to}I_{M,\mathcal{L}}/(f_*(I_{N,\widetilde{\mathcal{L}}})+P_{M,\mathcal{L}})\overset{\cong}{\to} {\rm Gal}(f)$. Thus, we obtain the following commutative exact diagram
\[
\begin{array}{ccccccccc}
&&0&&0&&&&\\
&&\downarrow&&\downarrow&&&&\\
&&(f_*(U_{N,\widetilde{\mathcal{L}}})+P_{M,\mathcal{L}})/P_{M,\mathcal{L}}&\overset{{\rm id}}{\to}&(f_*(U_{N,\widetilde{\mathcal{L}}})+P_{M,\mathcal{L}})/P_{M,\mathcal{L}}&&&&\\
&&\downarrow&&\downarrow&&&&\\
0&\to&(f_*(I_{N,\widetilde{\mathcal{L}}})+P_{M,\mathcal{L}})/P_{M,\mathcal{L}}&\to&I_{M,\mathcal{L}}/P_{M,\mathcal{L}}&\to&{\rm Gal}(f)&\to&0\\
&&&&\psi\downarrow&&\wr\downarrow&&\\
&&&&\prod_i\mathbb{Z}/e_i\mathbb{Z}&\overset{\Sigma}{\to}&\mathbb{Z}/n\mathbb{Z}&&\\
&&&&\downarrow&&\downarrow&&\\
&&&&0&&0&&.
\end{array}
\]
Here, we define a morphism $\Sigma:\prod_i\mathbb{Z}/e_i\mathbb{Z}\to\mathbb{Z}/n\mathbb{Z};(x_i)_i\mapsto\sum_{i=1}^r\frac{n}{e_i}x_i$. This diagram induces a short exact sequence $$0\to f_*(U_{N,\widetilde{\mathcal{L}}})+P_{M,\mathcal{L}}\to f_*(I_{N,\widetilde{\mathcal{L}}})+P_{M,\mathcal{L}}\overset{\bar{\psi}}{\to}{\rm Ker}\Sigma\to0.$$

\textbf{Step 3.} Finally, we will show that $\chi=(\bar{\psi}\circ f_*)\circ\rho^{-1}$. For this purpose, we put $\bar{\chi}:=\chi\circ\rho$, and prove that the following diagram commutes: 
$$
\xymatrix{
I_{N,\widetilde{\mathcal{L}}}/({P_{N,\widetilde{\mathcal{L}}}+U_{N,\widetilde{\mathcal{L}}}}) \ar@{->}[r]^-{f_*} \ar@{->}[d]^{\cong}_{\rho} \ar@{->}[dr]^{\bar{\chi}}&(f_*(I_{N,\widetilde{\mathcal{L}}})+ P_{M,\mathcal{L}})/({f_*(U_{N,\widetilde{\mathcal{L}}})+P_{M,\mathcal{L}}}) \ar@{^{(}->}[r] &I_{M,\mathcal{L}}/({f_*(U_{N,\widetilde{\mathcal{L}}})+P_{M,\mathcal{L}}}) \ar@{->}[dl]^{\bar{\psi}}\\
H_{1}(N) \ar@{->}[r]^{\chi}& \prod_{i=1}^{r}\mathbb{Z}/e_i\mathbb{Z}&.
}$$  
For every knot $J\subset\widetilde{\mathcal{L}}$, the inclusion $\iota_J:\partial V_J\hookrightarrow N$ induces a homomorphism $\iota_{J,*}:H_1(\partial V_J)\to H_1(N)$. We take any $[a]=[(a_J)_J]\in I_{N,\widetilde{\mathcal{L}}}/(U_{N,\widetilde{\mathcal{L}}}+P_{N,\widetilde{\mathcal{L}}})$. By the definition of an id\`ele, there exists a finite link $\Lambda\subset\widetilde{\mathcal{L}}$ such that for any $J\not\subset \Lambda$, $a_J=m_J[\mu_J]$. We have $\rho([a])=\sum_{J\subset \mathcal{L}}\iota_{J,*}(a_J)=\sum_{J\subset \Lambda}l_J[\lambda_J]$. Noting that lk$(\lambda_K,K)=0$ for each $K\subset L_0$, $$\bar\chi([a])=({\rm lk}(f_*(\sum_{J\subset \Lambda}l_J[\lambda_J]),K_i)\mbox{ mod }e_i)_i.$$
On the other hand, we consider $\bar{\psi}\circ f_*([a])$. By $u':=(m_J[\mu_J])_J\in U_{N,\widetilde{\mathcal{L}}}$, we have $[a]=[a-u']\in I_{N,\widetilde{\mathcal{L}}}/(U_{N,\widetilde{\mathcal{L}}}+P_{N,\widetilde{\mathcal{L}}})$. Define  $\beta=(\beta_K)_K\in I_{M,\mathcal{L}}$ by $\beta_K={\rm lk}(f_*(\sum_{J\subset\Lambda}l_J[\lambda_J]),K)[\mu_K]$. By \cref{diagonal}, $f_*(a-u')-\beta\in P_{M,\mathcal{L}}$ implies that 
$$f_*([a])=f_*([a-u'])=[\beta]=[({\rm lk}(f_*(\sum_{J\subset\Lambda}l_J[\lambda_J]),K)[\mu_K])_K]\in\frac{f_*(I_{N,\widetilde{\mathcal{L}}})+ P_{M,\mathcal{L}}}{f_*(U_{N,\widetilde{\mathcal{L}}})+P_{M,\mathcal{L}}}.$$
Thus, we obtain $$(\bar{\psi}\circ f_*)([a])=({\rm lk}(f_*(\sum_{J\subset\Lambda}l_J[\lambda_J]),K_i)\mbox{ mod }e_i)_i=\bar{\chi}([a])$$
and hence the above diagram commutes. Therefore, by $\chi=\bar{\chi}\circ\rho^{-1}$, we obtain the assertion of the theorem as desired. 
\end{proof}

\noindent 
\textbf{Acknowledgement}.  I would like to thank my supervisor Professor Masanori Morishita for proposing the problem to find a topological analogue for 3-manifolds of Hilbert Satz 90 and the genus theory, and for his advice and encouragement. The author is very grateful to Jun Ueki for his advice and helpful comments.

\bibliographystyle{plainurl}
\bibliography
{Tashiro.refs.bib}
\


\leavevmode\\
Hirotaka Tashiro\\
Faculty of Mathematics, Kyushu University\\
744, Motooka, Nishi-ku, Fukuoka, 819-0395, JAPAN\\
e-mail: tashiro.hirotaka.035@s.kyushu-u.ac.jp

\end{document}